
\documentclass[11pt]{amsart}



\usepackage[T1]{fontenc}
\usepackage[utf8]{inputenc}
\usepackage{lmodern}
\usepackage{layout}
\usepackage[american]{babel}
\usepackage[babel, final]{microtype}
\usepackage{amssymb}
\usepackage{amscd}
\usepackage[mathscr]{eucal}
\usepackage{amsmath}
\usepackage{outlines}
\usepackage{enumitem}

\usepackage{float}

\usepackage[pdftex, dvipsnames]{xcolor}
\usepackage[pdftex, final]{graphicx}
\usepackage{caption}
\usepackage{subcaption}
\usepackage{pinlabel}
\usepackage[pdftex, letterpaper, includehead, includefoot, nomarginpar, 
margin=1.1in]{geometry}
\definecolor{mypurple}{RGB}{40,113,220}
\definecolor{myorange}{RGB}{255,60,0}
\usepackage[pdftex, final, colorlinks=true, urlcolor=myorange, 
linkcolor=myorange, citecolor=mypurple, filecolor=purple, menucolor=purple, 
bookmarks=true, bookmarksdepth=3, bookmarksnumbered=true, bookmarksopen=true, 
bookmarksopenlevel=2]{hyperref}
\hypersetup{
  pdftitle={Surgery on conformally Anosov flows},
  pdfauthor={Federico Salmoiraghi},
  pdfsubject={Surgery, Legendrian tranverse Knots, Anosov Flows, Contact 3-Manifolds},
  pdfkeywords={ Surgery, Legendrian tranverse Knots, Anosov Flows, Contact 3-Manifolds
    57M27, 57R58}
}
\usepackage{tikz}
\usetikzlibrary{arrows, automata}
\usetikzlibrary{cd}

\usepackage[final]{showlabels} 


\setlength{\textwidth}{5.9in}
\setlength{\evensidemargin}{0.35in}
\setlength{\oddsidemargin}{0.35in}


\makeatletter
\g@addto@macro\@floatboxreset\centering
\makeatother

\allowdisplaybreaks[4]

\addto\extrasamerican{%
}
\let\fullref\autoref
\newtheorem{maintheorem}{Theorem}

\newtheorem{theorem}{Theorem}[section]
\newtheorem{corollary}{Corollary}[section]
\newtheorem{proposition}{Proposition}[section]
\newtheorem{lemma}{Lemma}[section]

\theoremstyle{definition}
\newtheorem{definition}{Definition}[section]
\newtheorem{remark}{Remark}[section]
\newtheorem{example}{Example}[section]

\newtheorem{question}{Question}[section]
\makeatletter
\let\c@maincorollary=\c@maintheorem
\let\c@corollary=\c@theorem
\let\c@proposition=\c@theorem
\let\c@lemma=\c@theorem
\let\c@remark=\c@theorem
\let\c@definition=\c@theorem
\let\c@example=\c@theorem
\let\c@question=\c@theorem
\makeatother
\def\makeautorefname#1#2{\expandafter\def\csname#1autorefname\endcsname{#2}}
\makeautorefname{maintheorem}{Theorem}%
\makeautorefname{maincorollary}{Corollary}%
\makeautorefname{theorem}{Theorem}%
\makeautorefname{corollary}{Corollary}%
\makeautorefname{proposition}{Proposition}%
\makeautorefname{lemma}{Lemma}%
\makeautorefname{remark}{Remark}%
\makeautorefname{definition}{Definition}%
\makeautorefname{example}{Example}%
\makeautorefname{equation}{equation}%






\begin{document}

%
\title[Goodman surgery and projectively Anosov flows]{Goodman surgery and projectively Anosov flows}

\author{Federico Salmoiraghi}\thanks{This research was supported by the Israel Science Foundation (grant No. 1504/18)}
\address{Department of Mathematics \\ Israel Institute of Technology \\ Haifa, Israel}
\email{\href{mailto:salmoiraghi@campus.technion.ac.il}{salmoiraghi@campus.technion.ac.il}}



\begin{abstract}

We introduce a generalization of Goodman surgery to the category of projectively Anosov flows. This construction is performed along a knot that is simultaneously Legendrian and transverse for a supporting bi-contact structure. If the flow is Anosov there is a particular class of supporting bi-contact structures that induce Lorentzian metrics satisfying Barbot's criterion of hyperbolicity. 
Foulon and Hasselblatt construct new contact Anosov flows by surgery from a geodesic flow. We generalize their result showing that in any contact Anosov flow there is a family of Legendrian knots that can be used to produce new contact Anosov flows by surgery.  
Outside of the realm of Anosov flows we generate new examples of projectively Anosov flows on hyperbolic 3-manifolds. These flows contain an invariant submanifold of genus $g>0$.  
We also give some application to contact geometry: we interpret the bi-contact surgery in terms of classic contact-Legendrian surgery and admissible-inadmissible transverse surgery and we deduce some (hyper)tightness result for contact and transverse surgeries.

\end{abstract}

\maketitle



%
%
%
%
%
%
%
%
%

\section{Introduction} 
\label{sec:intro}

A classic example of the beautiful
intertwine between hyperbolic dynamics, foliation theory, and contact geometry is given by an Anosov flow. Geometrically an Anosov flow is determined by two transverse invariant foliations with expanding and contracting behaviors \cite{Ano1}, \cite{Ano2}.  Mitsumatsu \cite{Mit} first noticed that every Anosov vector field also belongs to the intersection of a pair of transverse contact structures rotating towards each other. We call these pairs {\it bi-contact structures} and the associated flows {\it projectively Anosov flows \footnote{Projectively Anosov flows appear in the existing literature also under the name of flows with {\it dominated splitting} (\cite{Pu}, \cite{PuM}) and {\it conformally Anosov flows} \cite{ElTh}}} (shortly pA flows). Projectively Anosov flows define a bigger class of flows than Anosov flows do. Moreover it is possible to show that there is a multitude of $3$-manifolds that carry bi-contact structures but do not carry an Anosov flow.

Hozoori (\cite{Hoz}, \cite{Hoz2}) has further highlighted the role of pA flows as a link between Anosov dynamics and symplectic geometry. This point of view has been recently used by \cite{KLMM} to define Floer-type invariants for Anosov flows. 

Projectively Anosov flows have been extensively studied also from the point of view of hyperbolic dynamics (\cite{HM}, \cite{AF}, \cite{Pu}, \cite{PuM}) and Riemman geometry (\cite{BL}, \cite{BLP}, \cite{Hoz3}, \cite{Hoz4}) while connections to foliation theory and topology have been known for decades (\cite{Asa}, \cite{EtGh}).
 
The importance of pA flows makes the introduction of a surgery operation desirable. In \cite{FS1} we defined a Dehn-type surgery along a knot in a bi-contact structure defining a volume preserving Anosov flow. 
In this paper we define a surgery that can be performed in {\it any}  projectively Anosov flow. To this end we identify a pA flow with an underlying bi-contact structure $(\xi_-,\xi_+)$ and we call a knot $K$ {\it Legendrian-transverse} if its tangents belongs to $\xi_-$ and they are tranverse to $\xi_+$. From now on we assume $q$ to be an integer. 
\begin{maintheorem}
\label{thm2}
Given a Legendrian-transverse knot $K$ in a bi-contact structure (equivalently, a pA flow) on a $3$-manifold, there is a $(1,q)$-Dehn type surgery on a transverse annulus $C$ containing $K$, that yields bi-contact structures for infinitely many values of $q$. 
\end{maintheorem}

The bi-contact surgery of \fullref{thm2} can be interpreted as an extension to pA flows of the surgery operation on Anosov flows introduced by Goodman \cite{Goo}. Indeed, in a general Anosov flow the operation of \fullref{thm2} can be performed under similar hypothesis of Goodman's construction (see \cite{Hoz2}) and it generates the {\it same} flows.

\subsubsection{Applications to (contact) Anosov flows}In order to show the Anosovity of a flow it is enough to prove the existence of a cone field satisfying a number of conditions according to the {\it cone field criterion for hyperbolicity}. Goodman used this argument to show that we can produce new Anosov flows by surgery along a knot {\it near a closed orbit}. 

Barbot \cite{Bar1}  gives a formulation of the cone field criterion using Lorentzian metrics that are also {\it Lyapunov} in the sense that their positive cones are exponentially strictly contracting. Barbot's formulation of the cone criterion has been successfully used by Foulon and Hasselblatt to construct the first examples of {\it contact} Anosov flows on hyperbolic manifolds \cite{FoHa1} performing a Goodman-type surgery along a Legendrian knot in a geodesic flow \footnote{In contrast to Goodman's construction the knots used by Foulon and Hasselblatt are located far from a closed orbit.  Nevertheless, Foulon and Hasselblatt show that the surgered flow is Anosov for $q>0$. Their argument implicitly relies on particular features of the geodesic flow (see \fullref{AAA} for more details).}.

In a contact Anosov flow different from the geodesic  flow it is not clear along which Legendrian knots (if any) Foulon and Hasselblatt construction produces a new contact Anosov flow.
 Using \fullref{thm2} we show that in every contact Anosov flow there are families of Legendrian knots that can be used to generate new contact Anosov flows by surgery.

\begin{maintheorem}
\label{thm:7}
Let $\phi^t$ be any contact Anosov flow defined by the bi-contact structure $(\xi_-,\xi_+)$ and preserving a positive contact structure $\eta_+$. Consider a knot $L$ simultaneously Legendrian for $\eta_+$ and for $\xi_-$. The knot $L$ is Legendrian-transverse for $(\xi_-,\xi_+)$ and a $(1,q)$-bi-contact surgery as in \fullref{thm2} along $L$ yields a contact Anosov flow for every $q\in \mathbb{N}$.
\end{maintheorem}

One might wonder whether the Legendrian knots of \fullref{thm:7} are abundant. Note that a knot $L$ as in \fullref{thm:7} can be interpreted as a periodic orbit of a pA flow $\psi^t$ defined by the bi-contact structure $(\xi_-,\eta_+)$. 
In order to study the set of the closed orbits of $\psi^t$ we prove  the following.

\begin{proposition}
\label{thm:8}
Let $\phi^t$ be a contact Anosov flow defined by a bi-contact structure $(\xi_+,\xi_-)$ and preserving a contact structure $\eta_+$. The flow $\psi^t$ defined by the bi-contact structure $(\xi_-,\eta_+)$ is Anosov.

\end{proposition}

A consequence of \fullref{thm:8} is that, in a general contact Anosov flow, there are infinitely many Legendrian knots along which a $(1,q)$-surgery generates a new contact Anosov flow for every $q>
0$. 
\\
 
Furthermore, we use a particular family of contact forms supporting an Anosov flow to construct Lorentz-Lyapunov metrics and we express them in coordinates in concrete cases.

\begin{maintheorem}
\label{LL}
Let $\phi^t$ be an Anosov flow (transitive or not) with orientable weak foliations and generating vector field $X$. There is a pair of contact forms $(\alpha_-,\alpha_+)$ defining $X$ such that the pair of $C^0$ Lorentzian metrics 
$$Q^{\pm}=\pm \alpha_-\otimes \alpha_+-\beta^2$$
is Lyapunov.  
Here $\beta$ is the canonical invariant $1$-form such that $\ker\beta=E^{uu}\oplus E^{ss}$ and $ \beta(X)=1$.

\end{maintheorem} 

When the flow is volume preserving, a recent result of Hozoori \cite{Hoz2} shows that there is a is a reparametrization $X_L$ and a pair of contact forms $(\alpha_-,\alpha_+)$ supporting $X_L$ with Reeb vector fields $(R_{\alpha_-},R_{\alpha_+})$ such that $\beta$ is the  $1$-form satisfying $\ker\beta=\langle R_{\alpha_-} ,R_{\alpha_+} \rangle$ and $ \beta(X_L)=1$. This allows us to fully express these metrics in terms of a special pair of contact forms.

\subsubsection{Applications to contact geometry} An important question in contact geometry is to understand which properties of the contact structures are preserved by various flavours of surgery operations. A celebrated result of \cite{Wan1} states that a {\it Legendrian surgery} \footnote{Hystorically, a Legendrian surgery is a $(1,-1)$-surgery along a Legendrian knot. In general we call {\it contact} a surgery along a Legendrian knot in a contact structure.} in a {\it tight} contact structure produces a tight contact structure. In general a positive surgery along a Legendrian knot does not preserve tightness.

The bi-contact surgery operations defined in \fullref{thm2} have a natural interpretation in the framework of classic contact-Legendrian surgery (\cite{We}, \cite{Elia}, \cite{DiGe2}).  
Since Hozoori \cite{Hoz} proved that a bi-contact structure supporting an Anosov flow is {\it hypertight}, 
we have the following criterion for positive contact surgeries.
\begin{maintheorem}
\label{Cor2}
Let $\gamma$ be a closed orbit of any Anosov flow defined by the bi-contact structure $(\xi_-,\xi_+)$. For every $q\in \mathbb{N}$ a $(1,q)$-contact surgery along $\gamma$ yields an hypertight contact structure $\tilde{\xi}_+$.
\end{maintheorem}

Consider a $T^2$-bundle with Anosov monodromy equipped with a vertically rotating positive contact structure $\xi_+$ with minimal twisting.  A consequence of \fullref{Cor2} is that every $(1,q)$-contact surgery on $\xi_+$ along an orbit of the suspension flow is hypertight.\\

A surgery along a transverse knot in a contact structure is called {\it inadmissible} if it {\it adds twisting} to the original contact structure.  Such an operation does not produce in general, a tight contact structure. However, for bi contact structures defining a volume preserving Anosov flow we have the following.

\begin{maintheorem}
\label{Cor}
Let $K$ be a Legendrian-transverse knot (not necessarily isotopic to a closed orbit) in a bi-contact structure $(\xi_-,\xi_+)$ defining a volume preserving Anosov flow. Every inadmissible transverse $(1,q)$-surgery on $\xi_+$ along $K$ yields a hypertight contact structure $\tilde{\xi}_+$.
\end{maintheorem}

\subsubsection{Applications to pA flows } The bi-contact structures defining Anosov flows have been fully characterized from a symplectic and contact-geometric prospective by Hozoori \cite{Hoz}. 

In \cite{BBP} Bonatti, Bowden and Potrie sketch the construction of a projectively Anosov flow in a hyperbolic $3$-manifold that has attracting or repelling orbits (therefore not Anosov) with taut invariant foliations using an adaptation of the {\it hyperbolic plugs} as defined by B\'{e}guin, Bonatti and Yu in \cite{BBY}. 

Using \fullref{thm2} we construct new families of non-Anosov, bi-contact structures on hyperbolic manifolds.

\begin{maintheorem}\label{Non}
A $(1,q)$-Dehn surgery on the figure-eight knot supports infinitely many non-homotopic bi-contact structures. The associated pA-flows are not Anosov and contain an invariant surface of genus $g>0$. 
\end{maintheorem}

The construction of \fullref{Non} is roughly as follows. Mitsumatsu (\cite{Mit}, \cite{Mit2}) introduces an infinite family of tight bi-contact structures called {\it propellers} on every $T^2$-bundles over the circle. For instance, consider the Anosov automorphism of $ T^2$ induced by the linear map 
$$A=\begin{pmatrix}
2&1\\
1&1
\end{pmatrix}$$
and consider the torus bundle over the circle $M(0)=T^2\times [0,1]/\sim$, where the equivalence relation $\sim$ is defined by $(x,0)\sim(A(x),1)$. The complement of the knot $K$ given by the suspension of $(0,0)$ is diffeomorphic to the complement of the figure-eight knot in $S^3$. Call $M(q)$ the manifold obtained by performing Dehn $(1,q)$-surgery on $K$ in $M(0)$. As a consequence of a celebrated result of Thurston \cite{Th}, $M(q)$ is hyperbolic except when $q\in \{0,\pm1,\pm2,\pm3,\pm4\}$. It is easy to see that if $M(0)$ is equipped with a propeller, $K$ is always a Legendrian-transverse knot. Therefore, we can apply the surgery described in  \fullref{thm2} on $M(0)$ to produce $M(q)$ endowed with an infinite family of new bi-contact structures. 

The existence of annuli foliated by compact trajectories show that the new flows are not Anosov while the existence of an invariant submanifold of genus $g>1$ is a consequence of the existence of toroidal leaves in the invariant foliation of the initial pA flow.

\subsection{Aknowledgment}

The author would like to thank Tali Pinsky for her support and for introducing him to this fascinating subject. He would also like to thank Surena Hozoori and Martin Mion-Mouton for the helpful conversations. Finally the author would like to thank Thomas Barthelmé and Ilaria Patania for their advices that greatly improved the quality of the manuscript.

\section{Anosov flows, projectively Anosov flows and bi-contact structures} 
Anosov flows are a class of dynamical system characterized by the contracting and expanding behaviour of two transverse invariant directions 

\begin{definition}
Let $M$ be a closed manifold and $\phi^t:M \rightarrow M$ a $C^1$ flow on $M$. The flow $\phi^t$ is called {\it Anosov} if there is a splitting of the tangent biundle $TM=E^{uu} \oplus E^{ss} \oplus \langle X \rangle$ preserved by $D\phi^t$ and positive constants $A$ and $B$ such that
$$\lVert D \phi^t(v^u) \rVert\geq Ae^{Bt} \lVert  v^u\rVert\;\;\;\;\;\;\text{for any}\;v^u \in E^{uu}$$
$$\lVert D \phi^t(v^s) \rVert\leq Ae^{-Bt} \lVert  v^s\rVert\;\;\;\;\text{for any}\;v^s \in E^{ss}.$$
Here $\lVert \cdot \rVert$ is induce by a Riemmanian metric on $TM$.
We call $E^{uu}$ and $E^{ss}$ respectively the {\it strong unstable bundle} and the {\it strong stable bundle}. 

\end{definition}

Classic examples of Anosov flows are the geodesic flow on the unit tangent bundle of a hyperbolic surface and the suspension flows of hyperbolic linear automorphisms of the torus.

Anosov showed that the distributions $E^{ss}$ and $E^{uu}$ are uniquely integrable and the associated foliations are denoted by $\mathcal{F}^{ss}$ and $\mathcal{F}^{uu}$. Furthermore, the {\it weak stable bundle} $E^s=E^{ss} \oplus \langle X \rangle$ and the {\it weak unstable bundle} $E^u=E^{uu} \oplus \langle X \rangle$ are also uniquely integrable and the codimension one associated foliations are dented with $\mathcal{F}^{s}$ and $\mathcal{F}
^{u}$.

Mitsumatsu \cite{Mit} first noticed that an Anosov flow with orientable weak invariant foliations is tangent to the intersection of two transverse contact structures (see also Eliashberg-Thurston \cite{ElTh}). We will call such pairs {\it bi-contact structures}. However, the converse statement is not true and there are bi-contact structures that do not define Anosov flows as the following example shows.

 as showed in the following example.

\begin{example}
\label{ex:T3}
We construct a family of bi-contact structure on $T^3$ (called {\it propellers}) using a recipe introduced by Mitsumatsu in \cite{Mit} and \cite{Mit2}.
Consider the contact forms defined on $T^2\times I$
$$\alpha_n=cos(2n\pi z)dx-sin(2n\pi z)dy,$$
$$\alpha_{-m}=cos(2m\pi z)dx+sin(2m \pi z)dy.$$
They are not transverse to each on the tori defined by  $\{z=0\}$, $\{z=\frac{1}{4}\}$, $\{z=\frac{1}{2}\}$, $\{z=\frac{3}{4}\}$. If we introduce a perturbation $\epsilon(z)\: dz$ such that $\epsilon(z)$ is a function that does not vanish on these tori, the contact structures $\alpha_+=\alpha_n+\epsilon(z)\: dz$ and $\alpha_-=\alpha_{-m}$ are transverse (see \fullref{Propellers}). The presence of an compact invariant submanifolds shows that the associated flow is not Anosov.

\begin{figure}
\label{bundles}
\includegraphics[width=0.9\textwidth]{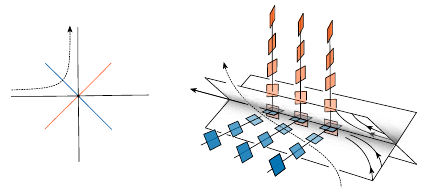}
  \caption{In red and blue the plane fields defining the bi-contact structure. }
\label{bundles}
\end{figure}

\end{example}
We have the following important generalization of Anosov flows. 
\begin{definition}
\label{pAf}
Let $M$ be a closed manifold and $\phi^t:M \rightarrow M$ a $C^1$ flow on $M$. The flow $\phi^t$ is called {\it projectively Anosov} if there is a splitting of the projectified tangent bundle $TM/ \langle X \rangle=\mathcal{E}^u \oplus \mathcal{E}^s$ preserved by $D\phi^t$ and positive constants $A$ and $B$ such that
$$\frac{\lVert D \phi^t(v^u) \rVert}{   \lVert D \phi^t(v^s) \rVert}\geq Ae^{Bt} \frac{\lVert  v^u\rVert}{\lVert  v^s\rVert}\;\;\;\;\;\;\;{\text for \; any}\;v^u \in \mathcal{E}^u \; {\text and}\;v^s \in \mathcal{E}^s $$

Here $\lVert \cdot \rVert$ is induce by a Riemmanian metric on $TM$.
We call $\mathcal{E}^u$ and $\mathcal{E}^s$ respectively the {\it unstable bundle} and the {\it stable bundle}. 

\end{definition}
The flows of \fullref{pAf} are referred also to as {\it conformally Anosov flows} and {\it flows with dominated splitting on $TM/ \langle X \rangle$}. 

The invariant bundles $\mathcal{E}^u$ and $\mathcal{E}^s$ induce invariant plane fields $E^u$ and $E^s$ on $M$. These plane fields are continuous and integrable, but unlike the Anosov case the integral manifolds my not be unique (see \cite{ElTh} and \cite{Nod}).
However, when they are smooth they also are uniquely integrable. We call these flows {\it regular projectively Anosov} (see \cite{Nod}, \cite{NoTs} and \cite{Asa} for a complete classification).

\begin{figure}
\label{Propellers}

\includegraphics[width=1\textwidth]{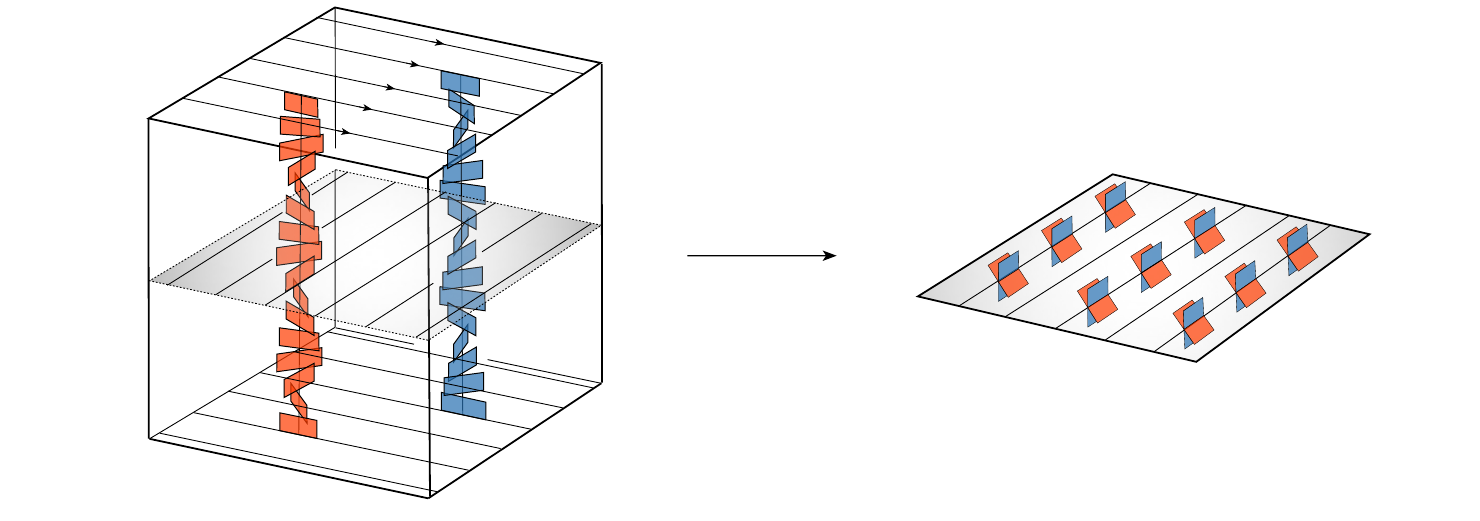}
  \caption{On left: a propeller on $T^3$. In red and blue the positive and negative contact structures. On the right:  adding a perturbation $\epsilon \:dz$ to one of the contact forms in a neighborhood of an horizontal torus $\Sigma$ along which the contact structures have collinear characteristic foliations, produces a bi-contact structure. The surface $\Sigma$ is a compact invariant submanifold foliated by flowlines. }
\label{Propellers}
\end{figure}

\begin{proposition}[Mitsumatsu \cite{Mit}]
Let $X$ be a $C^1$ vector field on $M$. Then $X$ is projectively Anosov if and only if it is defined by a bi-contact structure.
\end{proposition}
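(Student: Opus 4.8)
The plan is to work in the quotient bundle $V = TM/\langle X\rangle$, a rank-two bundle carrying the linearized flow $D\phi^t$, and to translate both the contact condition and the projectively Anosov condition into statements about line fields in $V$. A plane field containing $X$ is the same datum as a line field $\ell$ in $V$; I represent $\ell$ by a vector field $e$ transverse to $X$ and set $\zeta = \langle X, e\rangle = \ker\alpha$. The first step is the computation
$$\alpha \wedge d\alpha\,(X,e,f) = -\,\alpha([X,e]),$$
valid for any $f$ with $\alpha(f)=1$, which shows that $\zeta$ is a contact structure exactly when the transverse rotation $r(\ell):=\alpha([X,e])$ is nonvanishing, its sign distinguishing a positive from a negative contact structure. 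Dually, $r(\ell)=0$ means $[X,e]\in\langle X,e\rangle$, i.e.\ $\ell$ is $D\phi^t$-invariant and $\zeta$ is integrable. The second observation is that along each fibre $\mathbb{P}(V_p)\cong\RP{1}$ the function $\theta\mapsto r_p(\theta)$ is the infinitesimal generator of the projectivized linear action of $D_pX \bmod X$, hence a trigonometric polynomial of degree two with at most two zeros.

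For the implication ``bi-contact $\Rightarrow$ projectively Anosov'', I start from a positive contact structure $\xi$ and a negative contact structure $\eta$ with $\xi\cap\eta=\langle X\rangle$, which give in each fibre two distinct lines with $r(\ell_\xi)>0>r(\ell_\eta)$. Since $r_p$ has at most two zeros and changes sign between $\ell_\xi$ and $\ell_\eta$, it has exactly one zero on each of the two arcs they bound. The sign conditions say that on the boundary of the arc $I^+$ joining $\ell_\xi$ to $\ell_\eta$ through one zero the flow rotates strictly inward, so the associated cone field $C^+$ is strictly forward-invariant, while the complementary cone $C^-$ is strictly backward-invariant. Feeding $C^+,C^-$ into the cone criterion on the compact manifold $M$ produces the invariant line fields $\mathcal{E}^u=\bigcap_{t\ge 0}D\phi^t(C^+)$ and $\mathcal{E}^s$, and the strict (hence, by compactness, uniform) contraction of the cones is precisely the exponential domination $\lVert D\phi^t(v^u)\rVert/\lVert D\phi^t(v^s)\rVert\ge Ae^{Bt}\lVert v^u\rVert/\lVert v^s\rVert$ of the definition.

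For the converse I begin with the dominated splitting $\mathcal{E}^u\oplus\mathcal{E}^s$. Invariance forces $r_p(\mathcal{E}^u_p)=r_p(\mathcal{E}^s_p)=0$, and since $r_p$ has at most two zeros these are its only zeros; thus each fibre circle is cut into two arcs $I^\pm$ on which $r_p$ has opposite, constant sign. Choosing a line field $\ell_\xi$ valued in $I^+$ and a line field $\ell_\eta$ valued in $I^-$ yields, by the first step, a positive and a negative contact structure that are transverse and meet exactly along $\langle X\rangle$; this is the required bi-contact structure.

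The main obstacle is regularity. The bundles $\mathcal{E}^u,\mathcal{E}^s$ are in general only continuous (smooth precisely in the regular case), so the arcs $I^\pm$ have merely continuous boundaries and I must produce genuinely $C^1$ line fields $\ell_\xi,\ell_\eta$ lying strictly inside them; because $r$ depends on $[X,e]$, this is a $C^1$-approximation rather than a $C^0$ one, and it must be carried out so that the open conditions $r>0$ and $r<0$ survive the smoothing. On the other side, turning strict cone invariance into the uniform constants $A,B$ relies on compactness of $M$ and the $C^1$ hypothesis on $X$, together with the non-degeneracy of the two zeros of $r_p$, which I expect to read off from the strictness of the domination. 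These are the points I would treat most carefully; the algebraic skeleton above is otherwise routine.
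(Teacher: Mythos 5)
The paper does not actually prove this proposition; it is quoted from Mitsumatsu's work, so your proposal has to be measured against the standard argument rather than against anything in the text. Your forward direction (bi-contact $\Rightarrow$ projectively Anosov) does follow that argument correctly: the computation $\alpha\wedge d\alpha(X,e,f)=-\alpha([X,e])$ is right, the two contact conditions give strict inward rotation of the flow at the two boundary line fields of one of the cone fields they bound, and strict invariance of that cone field plus compactness of $M$ feeds into the cone criterion for dominated splittings to produce $\mathcal{E}^u$, $\mathcal{E}^s$ and the exponential estimate. The digression about the zeros of $r_p$ is not needed there, and the formula $\mathcal{E}^u=\bigcap_{t\geq0}D\phi^t(C^+)$ should read $\bigcap_{t\geq0}D\phi^t\bigl(C^+(\phi^{-t}x)\bigr)$, but these are cosmetic.

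The converse contains a genuine gap: a conflation of two different rotation quantities. Your $r(\ell)=\alpha([X,e])$ depends on the $1$-jet of the extension, not only on the line $\ell_p$: replacing $\alpha$ by $\alpha+\gamma$ with $\gamma_p=0$ changes $\alpha([X,e])$ at $p$ by derivative terms of $\gamma$ that need not vanish. The quantity that is fibrewise a degree-two trigonometric polynomial with at most two zeros is the rotation of the projectivized linear flow measured against a fixed smooth connection; the quantity whose nonvanishing is the contact condition, and which vanishes on invariant sections, is the rotation measured against the section itself. These agree only when the section is parallel for the chosen connection, and no smooth connection can make the merely continuous bundles $\mathcal{E}^u,\mathcal{E}^s$ parallel. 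Hence (i) ``invariance forces $r_p(\mathcal{E}^u_p)=0$'' fails for the connection-based $r$ and is not even defined for your $r$; and (ii) the key claim that every $C^1$ line field valued in the open sectors is automatically contact is false. Concretely, for the suspension of a hyperbolic toral automorphism, with $\theta$ the angle from the (here smooth) bundle $\mathcal{E}^u$, the fibrewise rotation is $-\lambda\sin2\theta$; the section $\theta=\pi/4+f$ with $|f|<\pi/4$ pointwise but $X(f)$ large somewhere stays in the sector yet has contact invariant $-\lambda\cos(2f)-X(f)$, which changes sign, so the corresponding plane field is not contact. Relatedly, your argument never uses the constant $A$ in the definition of projective Anosovity: when $A<1$ the flow can rotate sector lines the wrong way for short times, so even the connection-based rotation need not have constant sign on the sectors until one installs an adapted metric making the domination immediate ($A=1$). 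The real content of Mitsumatsu's converse is exactly this: an adapted metric giving a uniform lower bound for the fibrewise rotation on compact subcones, together with a smooth section whose derivative along $X$ is small against that bound, constructed relative to a reference splitting that is only continuous. Your closing paragraph gestures at this (``a $C^1$-approximation rather than a $C^0$ one'') but defers it as a routine smoothing step; it is where the theorem lives, and as written your converse does not go through.
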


\begin{example}
\label{example1}

This construction can be extended to any $T^2$-bundle over $S^1$ if we make the directions of the linear characteristic foliations of $\xi=\ker \alpha$ and $\eta=\ker \beta$ on the top and bottom torus to agree with the rotation angles of the monodromy (\cite{Mit2} and see \fullref{Anosov}).  

 \begin{figure}
\label{Anosov}

\includegraphics[width=1\textwidth]{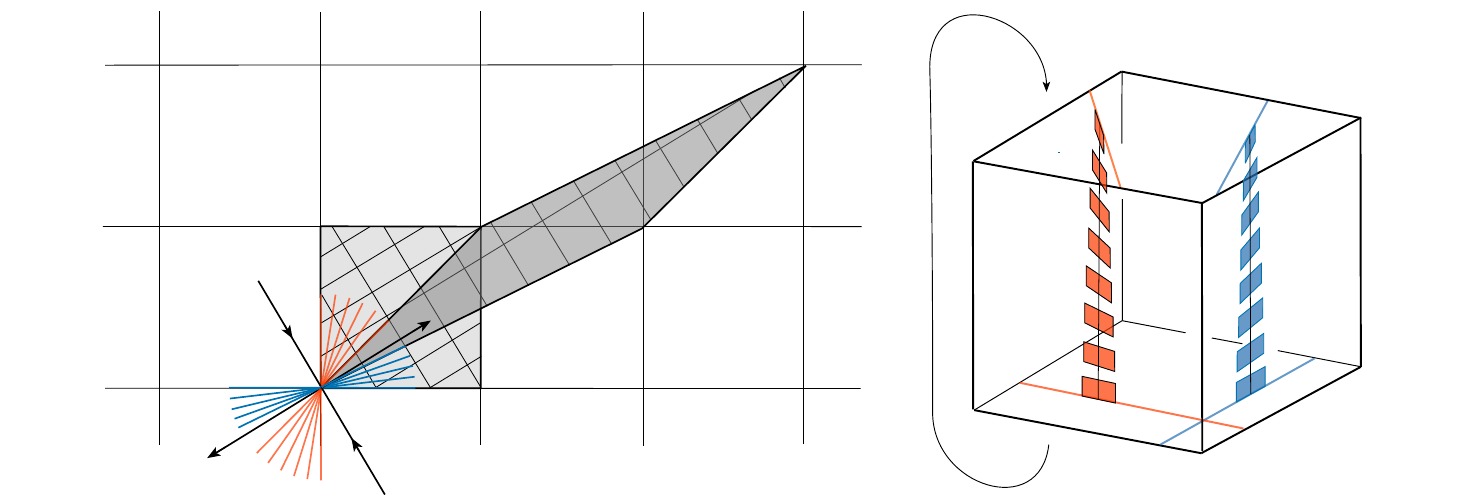}
  \caption{Propeller construction adapted to a $T^2$-bundle with linear Anosov monodromy. This example has minimal twisting, therefore it defines the Anosov suspension flow. Note that in this case no perturbation $\epsilon \: dz$ is needed.}
\label{Anosov}
\end{figure}
 
As Noda shows \cite{Nod}, all the $T^2$-bundles over $S^1$ with invariant foliations of class $C^2$ are obtained using this construction. Precisely, if the invariant foliations contains closed leaves, they are tori isotopic to linear flows on $T^2$ in the direction of the eigenvectors of the monodromy. 
 
In the case the $T^2$-bundle over $S^1$ has hyperbolic monodromy and the invariant foliations are of class $C^2$ without compact leaves, the flow is actually an Anosov flow given as the suspension of the monodromy. The bi-contact structures in these cases are obtained by propeller constructions with minimal twisting (see \fullref{Anosov}).
\end{example}

\subsection{Reeb dynamics of a bi-contact structure defining an Anosov flows}

We now present a characterization of bi-contact structures that define Anosov flows using the Reeb dynamics.

\begin{definition}[Hozoori \cite{Hoz}]
Consider a bi-contact structure $(\xi_-,\xi_+)$. A vector field is {\it dynamically positive (negative)} if at every $p\in M$ it lies in the interior of the first or third (second or forth) region in \fullref{bundles}.
\end{definition}

\begin{theorem}[Hozoori \cite{Hoz}]
\label{prop:Hoz}
Let $\phi^t$ be a projectively Anosov flow on $M$. The following are equivalent.
\end{theorem}

\begin{enumerate}
\item {The flow \it $\phi^t$ is Anosov},

\label{cond:h1}

\item {\it There is a pair} $(\alpha_-,\alpha_+)$ {\it of positive and negative contact forms defining} $\phi^t$ {\it such that the Reeb vector field of} $\alpha_+$ {\it is dynamically negative},

\label{cond:h2}

\item {\it There is a pair} $(\alpha_-,\alpha_+)$ {\it of positive and negative contact forms defining} $\phi^t$ {\it such that the Reeb vector field of} $\alpha_-$ {\it is dynamically positive}.

\label{cond:h3}
\end{enumerate}

\begin{remark}
We assume the reader familiar with contact geometry. We refer to \cite{Hoz} and \cite{Hoz2} for the basic notions in bi-contact geometry and projectively Anosov flows and for a precise discussion on the regularity of the plane fields, bundles and foliations involved. The bi-contact structures in the present work are defined by $C^1$ 1-forms. In particular a bi-contact structure $(\xi_-,\xi_+)$ is a pair of transverse plane fields $(\xi_-=\ker \alpha_-,\xi_+=\ker \alpha_+)$ for a pair of $C^1$ 1-forms $(\alpha_-,\alpha_+)$ such that $\alpha_+\wedge d\alpha_+>0$ is a positive $C^0$ volume form. Note that the Reeb  vector fields $R_{\alpha_-}$ and $R_{\alpha_+}$ are a priori just $C^0$.  A $C^{k-1}$ vector field $X$ defines a contact Anosov flow if $X$ is Anosov and is the Reeb vector field of a $C^{k}$ contact form $\beta$.  
\end{remark}

\section{Dehn type surgeries on $3$--manifolds}
\label{Dehn}
Surgery is a very effective way to produce new examples of manifold from classic ones. In this section we recall the basic notions and set the notation. We follow the descriptions and convention of \cite{Ge2} and \cite{BaEt1}. 

\label{3.1}
\subsection{Dehn surgery along a knot} We call {\it Dehn surgery along $K$} the following construction. We first remove a tubolar neighborhood $N$ of a knot $K$, then we glue a solid torus $S^1\times D^2$ using a diffeomorphism 
$\partial (S^1 \times D^2)\rightarrow \partial N$ that sends the meridian $\mu_0$ of $ \partial (S^1\times D^2)$ to the curve $p\mu+q\lambda$ in $\partial N$. Here $p$ and $q \in \mathbb{Z}$ are coprime, $\mu$ is the meridian of $\partial N$ while $\lambda$ is a preferred longitude (or framing of $K$) and we orient $\mu$ and $\lambda$ so that $\mu\cap \lambda=1$. We denote this operation as $(p,q)$-surgery. 

\subsection{Surgery along Legendrian knots}
If the ambient manifold is equipped with an additional geometric structure it is natural to ask for surgery operations that preserve it. For instance, given a contact $3$-manifold $(M,\xi_+)$ and a Legendrian knot $K$ in $M$, it is possible to define a Dehn-type surgery along $K$ that yield a new contact manifold $(\tilde{M},\tilde{\xi})$. We first choose the preferred longitude $\lambda$ given by the contact frame. This can be done considering the push--off of $K$ in a transverse direction to the contact planes along the knot. Every $K$ has a neighborhood $N_L$ contactomorphic to a standard one as follows. Consider on $\mathbb{R}^2 \times S^1$ the tight contact structure  $\xi_1=\ker(dt-w\:ds)$. Let $N^L$ the solid torus $\{(w,t,s):w^2+t^2\leq 1\}$. This is the neighborhood of the Legendrian curve $L=\{(w,t,s):w=w=0\}$ and its boundary  $\partial N^L$ is convex with two dividing curves parallel to $\Gamma=\{(w,t,s):w=0, t=1\}$. These solid tori are contactomorphic up to a perturbation of the boundary according to the following result
\begin{theorem}[Kanda \cite{Ka}]
\label{ka}
Let $\Gamma$ be a pair of longitudinal curves on the boundary of a solid torus $N$. Let $\mathcal{F}$ be a (singular) foliation on $\partial N$ that is divided by $\Gamma$. Then there is a unique (up to isotopy) tight contact structure on $N$ whose characteristic foliation on $\partial N$ is $\mathcal{F}$.
\end{theorem}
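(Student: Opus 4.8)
The plan is to reduce the statement to Eliashberg's uniqueness theorem for tight contact structures on the $3$-ball, namely that any two tight contact structures on $B^3$ inducing the same characteristic foliation on $\partial B^3 = S^2$ are isotopic rel boundary. The bridge between the two-curve dividing data $\Gamma$ on $\partial N$ and the ball is supplied by convex surface theory: Giroux's flexibility theorem and the Legendrian realization principle. Throughout, ``divided by $\Gamma$'' means that $\partial N$ is convex with dividing set $\Gamma$ for the contact structures in question.

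For existence, first I would exhibit a single tight contact structure on $N$ whose boundary is convex with dividing set $\Gamma$. The standard Legendrian-neighborhood model $N^L = \{x^2 + y^2 \le 1\} \subset \mathbb{R}^2 \times S^1$ with $\xi_1 = \ker(dy - x\,d\theta)$ already does this: after a $C^\infty$-small perturbation its boundary is convex and divided by two longitudinal curves isotopic to $\Gamma$. Then, given an arbitrary singular foliation $\mathcal{F}$ divided by $\Gamma$, I would invoke Giroux's flexibility theorem to find a $C^\infty$-small isotopy of the contact structure, supported near $\partial N$ and fixing $\Gamma$, after which the characteristic foliation on $\partial N$ is exactly $\mathcal{F}$. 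This produces a tight contact structure on $N$ realizing $\mathcal{F}$.

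For uniqueness, suppose $\zeta_0, \zeta_1$ are tight contact structures on $N$ inducing the same characteristic foliation $\mathcal{F}$ on $\partial N$, so that $\partial N$ is convex with dividing set $\Gamma$ for both. I would take a meridional disk $D$, Legendrian-realize its boundary $\partial D$ on $\partial N$ (permissible since the meridian meets $\Gamma$ essentially, hence is non-isolating), and make $D$ convex. The crucial step is to pin down the dividing set $\Gamma_D$ on $D$. Since $\partial D$ is a meridian it meets each of the two longitudinal curves of $\Gamma$ once, so $\#(\partial D \cap \Gamma) = 2$. By Giroux's criterion, tightness forbids any homotopically trivial closed dividing curve; on a disk every closed curve is homotopically trivial, so $\Gamma_D$ has no closed components and, having exactly two endpoints, must be a single arc joining them, a configuration unique up to isotopy. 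Cutting $N$ along $D$ and rounding corners yields a $3$-ball $B$ whose boundary characteristic foliation is determined entirely by $\mathcal{F}$ and $\Gamma_D$, hence agrees for $\zeta_0$ and $\zeta_1$. Eliashberg's theorem then gives an isotopy carrying the cut-open $\zeta_0$ to the cut-open $\zeta_1$; regluing along $D$ produces the desired isotopy on $N$.

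The main obstacle I expect is precisely the control of $\Gamma_D$ on the meridional disk together with the bookkeeping of the cut: one must rule out extraneous closed or boundary-parallel dividing curves (which is exactly where tightness enters, via Giroux's criterion) and verify that the two boundary configurations on $B$ genuinely coincide, so that Eliashberg's rel-boundary uniqueness applies. The remaining ingredients — existence of convex representatives, the flexibility used to prescribe $\mathcal{F}$, and the rounding-of-corners dictionary — are standard convex surface theory.
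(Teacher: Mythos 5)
This statement is quoted in the paper from Kanda's work and is not proved there --- the paper uses it as a black box to justify the uniqueness of the tight extension over the glued-in solid torus in contact $1/q$-surgery. So there is no in-paper argument to compare against; what can be assessed is whether your reconstruction is a valid proof, and it essentially is: it is the standard convex-surface-theory argument (in the style of Honda and Makar-Limanov, rather than Kanda's original characteristic-foliation approach). The key steps --- Legendrian realization of a meridian $\partial D$, convexity of $D$, the count $\#(\partial D\cap\Gamma)=2$ forcing $\Gamma_D$ to be a single boundary-parallel-free arc once Giroux's criterion excludes closed dividing curves, and reduction to Eliashberg's uniqueness on $B^3$ after cutting and edge-rounding --- are exactly the right ingredients, and you correctly identify tightness as entering only through Giroux's criterion.

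Two points deserve tightening. First, in the existence step, the dividing curves of the standard model $N^L$ have a definite slope, and an arbitrary pair of longitudinal curves $\Gamma$ need not be isotopic to them \emph{within} $\partial N$; one must first apply a self-diffeomorphism of the solid torus (meridional Dehn twists) carrying $\Gamma$ to the standard pair, which is harmless but should be said. Second, in the uniqueness step, before cutting and invoking Eliashberg's rel-boundary theorem you need the two contact structures to agree on a neighborhood of $\partial N\cup D$, not merely to induce abstractly matching data: use Giroux's uniqueness of the germ along a surface with prescribed characteristic foliation to match $\zeta_0$ and $\zeta_1$ near $\partial N$, perform the \emph{same} Legendrian realization and take convex disks with the same boundary, and then use flexibility to match the disk foliations rel boundary. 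You flag this as ``bookkeeping,'' which is fair, but it is precisely where the hypothesis of Eliashberg's theorem (identical boundary foliations on the cut-open ball) gets verified, so a complete write-up should carry it out.
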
 
We now remove a neighborhood $N$ of the Legendrian knot diffeomorphic to $N^L$ and glue back a solid torus $S^1\times D^2$ sending a meridian $\mu_0=*\times D^2$ where $*$ is a point of $S^1$ to a curve $p\mu+q\lambda$ on $\partial(\overline{M\setminus N})$. When we glue back the solid torus $S^1\times D^2$  we would like to extend the contact structure $\xi$ on $\overline{M\setminus N}$ over the solid torus. This boils down to finding contact structures on the solid torus $S^1\times D^2$ with a certain characteristic foliation on the convex boundary determined by the surgery coefficient $p/q$. When $p\neq 0$ this extension always exists and is tight in the solid torus by work of Honda  and Giroux. If $p=1$ (or equivalently for $(1,q)$-surgery), we can assume that the dividing set on the glued solid torus is a pair of curves parallel to the longitudinal curve $q\mu_0+\lambda_0$ where $\lambda_0=S^1\times *$ where $*$ is a point of $\partial D^2$. By \fullref{ka} there is a unique tight contact structure on $S^1\times D^2$ and therefore a unique extension in the new manifold. We call this Dehn type surgery {\it contact $(1,q)$-surgery}. Contact $(1,-1)$-surgery on $K$ is historically called {\it Legendrian surgery} on $K$.

\begin{theorem}[Wand \cite{Wan1}]
Legendrian surgery preserves tightness.
\end{theorem}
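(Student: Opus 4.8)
The plan is to convert the statement into one about open book monodromies and then to characterize tightness so that the effect of the surgery becomes transparent. By the Giroux correspondence I would first choose an open book $(\Sigma,\phi)$ adapted to $(M,\xi)$ in which the Legendrian knot sits on a page; this is always possible after positive stabilization. In this model Legendrian surgery, which by the discussion above is contact $(-1)$--surgery with respect to the contact framing, has a clean description: it replaces the monodromy $\phi$ by $\phi\circ\tau_K$, where $\tau_K$ denotes the right--handed Dehn twist about $K$. Thus the theorem reduces to the implication that if $(\Sigma,\phi)$ is tight then $(\Sigma,\phi\circ\tau_K)$ is tight, which I would prove in contrapositive form: an overtwisted disk after surgery must already be visible before surgery.

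To make the open book picture usable I would seek a certificate for overtwistedness that lives on a single page. The first candidate comes from the theorem of Honda, Kazez and Mati\'{c} that every open book supporting a tight structure is \emph{right--veering}, together with Goodman's \emph{sobering arcs}, which exhibit overtwistedness directly on a page. However, right--veering is only necessary and not sufficient for tightness, so this coarse invariant cannot detect the full obstruction. What is needed, and what Wand \cite{Wan1} supplies, is a finer combinatorial condition on $\phi$ --- an \emph{inconsistency} of the monodromy --- that is (i) invariant under positive stabilization, hence a property of $\xi$ rather than of the chosen page, and (ii) present exactly when $\xi$ is overtwisted. I would establish (i) and (ii) through the bypass and convex surface dictionary, translating an overtwisted disk into a distinguished configuration of arcs and dividing curves on $\Sigma$ and back, while tracking the characteristic foliation on the convex boundary as in the local model of \fullref{ka}.

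The crux is the behaviour of this certificate under the positive twist. At the coarse level monotonicity is immediate: a right--handed twist is itself right--veering, so right--veering is preserved and no sobering arc is created. The difficulty is to upgrade this to the sharp invariant and to show that an inconsistency for $\phi\circ\tau_K$ forces an inconsistency already for $\phi$. This is the heart of the argument and the step I expect to be hardest, because $\tau_K$ acts nontrivially on the isotopy classes of the arcs making up the configuration, so one must rule out that the twist itself manufactures the inconsistency. The resolution I would pursue is an isotopy analysis performed on a common stabilized page: one shows that any witnessing configuration after the twist can be pushed off the annular support of $\tau_K$, so that removing the twist leaves the inconsistency intact and produces the required certificate for $\phi$.

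As a consistency check and a genuine special case, I would observe that when $(M,\xi)$ is Stein or strongly symplectically fillable the result is classical: the handle attachment of Eliashberg and Weinstein shows that Legendrian surgery again yields a fillable manifold, which is automatically tight. Since tightness is strictly weaker than fillability, this covers only part of the statement, and the full theorem genuinely rests on the monodromy argument above, with the twist--monotonicity of the sharp overtwistedness certificate as its decisive ingredient.
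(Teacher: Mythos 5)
The paper does not actually prove this statement: it is imported as a black-box theorem of Wand \cite{Wan1}, so the only meaningful comparison is with Wand's published argument. Your outline correctly reproduces the architecture of that argument: realize the Legendrian knot on a page of a supporting open book via the Giroux correspondence, identify Legendrian (contact $(-1)$-) surgery with replacing the monodromy $\phi$ by $\phi\circ\tau_K$ (the page framing agrees with the contact framing, so this step is sound), observe that right-veering in the sense of Honda--Kazez--Mati\'c is necessary but not sufficient and hence useless here, and reduce everything to the monotonicity under right-handed twists of a stabilization-invariant certificate of overtwistedness (Wand's ``inconsistency''). Up to this point your reduction agrees with the actual proof.

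However, there is a genuine gap at exactly the step you flag as the crux, and the mechanism you propose to close it cannot work. You suggest that any witnessing configuration for $\phi\circ\tau_K$ ``can be pushed off the annular support of $\tau_K$,'' after which deleting the twist leaves a certificate for $\phi$. Such an isotopy argument is blind to the sign of the twist: if it were valid, the identical reasoning applied to $\phi\circ\tau_K^{-1}$ would show that contact $(+1)$-surgery also preserves tightness, which is false (left-handed twists, e.g.\ negative stabilization-like modifications, routinely create overtwisted structures). So the positivity of $\tau_K$ must enter in an essentially non-local way, and no argument that merely isotopes the certificate away from the twist region can be correct; indeed, the entire difficulty of Wand's theorem is that the certificate can interact essentially with $K$. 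Wand's actual proof does not push anything off the twist: it establishes invariance of consistency under the full equivalence generated by stabilization/destabilization of open books and runs an induction over the resulting classes of configurations, which is where the heavy lifting occurs. As written, your proposal names the decisive lemma but replaces its proof with a heuristic that fails; the fillability digression at the end is correct but, as you note yourself, covers only a strictly weaker case.
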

The results extends to every contact $(1,q)$-surgery with $q<0$.

\subsection{Surgery along transverse knots}
A knot $K$ in a contact manifold $(M,\xi_+)$ is {\it transverse} if its oriented tangent vector is positively transverse to $\xi_+$. The natural framework to define transverse surgery has been set up by Gay in \cite{Gay}, Conway in \cite{Con1} , and Baldwin and Etnyre in \cite{BaEt1}. This operation comes in two flavours, {\it admissible transverse surgery} and {\it inadmissible transverse surgery} both generalization of classic constructions introduced by Lutz and Martinet. Roughly speaking the first can be thought as removing twisting near a knot while the second as adding twisting.  

A transverse knot has a neighborhood $N_T$ contactomorphic to a solid torus $S^1\times D_{\{r<R\}}$ in $S^1\times \mathbb{R}^2$ with tight contact structure $\xi_{rot}=\ker(\cos f(r) ds+f(r) \sin f(r) d\theta)$ where $f:[0,\infty)\rightarrow [0,\pi)$ is an increasing surjective function of $r$ (see \fullref{transverse} ). If we identify some framing of $K$ with $\lambda=S^1\times \{r=R, \theta=0\}$. The characteristic foliation on the torus $r=R$ is given by parallel lines $f(R)\lambda-(\cot f(R)) \mu$ or equivalently lines of slope $k=-(\cot f(R))/f(R))$ where $\mu$ is a meridian of $K$. We will use a different sign convention to the one usually adopted by contact geometers. We associate the slope $0$ to the contact planes transverse to the core of the torus (identified with $K$) while the $\infty$ slope is associated with the preferred longitude $\lambda$. With this convention if we call $N^T_k$ the solid torus with characteristic foliation of slope $k\in (-\infty,\infty)$ on $\partial N^T_k$ we have $N^T_{k'}\subset N^T_k$ if $k'<k$. Note that different choices of the preferred longitude will give a different value of $k$.

\begin{figure}
\label{transverse}
\includegraphics[width=0.9\textwidth]{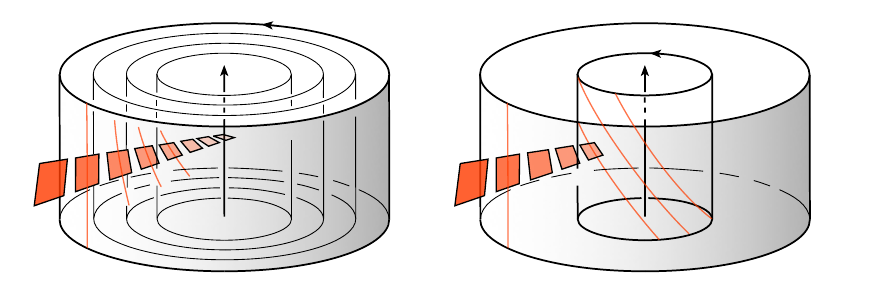}
  \caption{On the left: standard neighborhood of a transverse knot. On the right: collapsing the curves gives a (in this case negative and admissible) transverse surgery.}
\label{transverse}
\end{figure}

\subsubsection{Admissible transverse surgery} Remove a solid torus $N^T_{k'}$ inside $N^T_k$ and perform a {\it contact cut} on the boundary that consists in taking the quotient of the torus boundary $\partial (\overline{M\setminus N^T_{k'}})$ with respect to the leaves of the characteristic foliation on $\partial (\overline{M\setminus N^T_{k'}})$.
The quotient space $M'$ inherits a natural contact structure $\xi'$ and the set of points $K'\in M'$ with more than one pre-image under the quotient map $M\rightarrow M'$ is a transverse knot in the contact manifold $(M',\xi')$. 
\subsubsection{Inadmissible transverse surgery}
Consider the open manifold obtained by removing $K$ and than take the closure to obtain  a manifold with boundary with a contact structure that induces on $\partial M$ a characteristic foliation given by curves of slope $0$, or equivalently, meridional slope. We now glue a $T^2\times I$ with rotational contact structure such that the contact planes rotate $n$ half twists to some slope $k$, finally we perform a contact cut.

\subsection{Dehn type surgery on Anosov flows}

\label{G}

In this section we describe two Dehn-type surgery on Anosov flows. The first has been introduced by Goodman \cite{Goo}. This construction, inspired by Handel-Thurston one \cite{HaTh}, is performed along a transverse annulus sufficiently close to a closed orbit. We refer to \cite{Sha} for a more detailed description.
The second construction has been discovered by Foulon and Hasselblatt and produces new contact Anosov flows when it is performed along particular Legendrian knots in a geodesic flow.   

\subsubsection{Goodman surgery}
\label{Good}
Let $X$ be a $C^1$ Anosov vector field on a $3$-manifold and consider a closed orbit $\gamma$ of the flow with orientable invariant manifolds. A sufficiently small tubolar neighborhood of this orbit is cut by the invariant weak manifolds in four different sectors. In each of this sectors there is a smoothly embedded compct annulus $C$ transverse to the flow and parallel to the invariant weak manifolds containing $\gamma$.
The closed orbit $\gamma$ can be enclosed in a neighborhood $N_C$ with smooth torus boundary such that $C\subset \partial N_C$ and the flow points outwards on $C$. We set $M_C=M\setminus \text{int}(N_C)$. Goodman first splits $M$ in two pieces $M_C$ and $N_C$ and gluing back them together using a diffeomorphism $\phi:\partial N_C\rightarrow \partial M_C$ with support in $C$. On the surgery annulus $C$ there is a natural coordinate system given by two sets of curves described by parameters $(l,m)$. The $l$-curves are closed and parallel to $\partial C$ while the $m$-curves have endpoints on the boundary. The diffeomorphism can be described by a {\it shear} on $C$.
$$F:C\rightarrow C,\;\;\; (l,m)\rightarrow (l+f(m),m),$$
where $$f:[-\epsilon,\epsilon]\rightarrow S^1,\;\;\;v\rightarrow f(m),$$
and $q\in \mathbb{Z}$ and $f:\mathbb{R}\rightarrow[0,2\pi]$ is a nondecreasing smooth function such that $f((-\infty,-\epsilon))=0$ and $f([\epsilon,\infty))=2\pi q$ is a non decreasing smooth function. This operation defines a $C^1$ smooth vector field $\tilde{X}$ in the new manifold (see \cite{Sha}).

Topologically, Goodman's surgery is a $(1,q)$-surgery and yields manifolds homeomorphic to the ones constructed in \fullref{3.1} removing a solid torus neighborhood $N$ of $C$ and gluing back a new solid torus $S^1 \times D^2$ sending the meridian $\mu_0$ of $\partial{(S^1 \times D^2)}$ to the curve $\mu+q\lambda$ of $M\setminus N$ where $\mu\subset N\cap C$.

\begin{figure}
\label{Goodmann}

\includegraphics[width=0.9\textwidth]{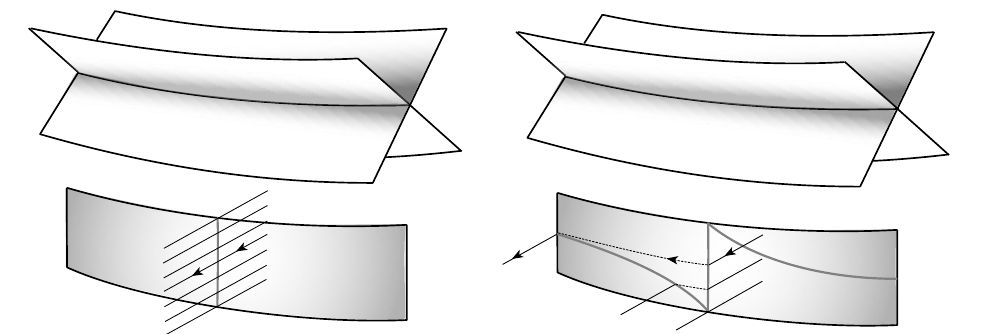}
  \caption{Goodman surgery near a closed orbit.}
\label{Goodmann}
\end{figure}

\subsubsection{Anosovity of Goodman's flows}
Anosovity is ensured for $q>0$ if we choose an annulus in the first and third quadrant, and for $q<0$ if the annulus is choosen in ther second or forth quadrant. The proof relies on the {\it cone criterion for hyperbolicity} and consists to show the existence of a cone field that is positively invariant and exponentially contracted along the flow. The existence of annuli with different preferred directions in a neighborhood of a closed orbit, implies that Goodaman surgery can be performed for every $q\in \mathbb{Z}
$.

\subsubsection{Foulon and Hasselblatt construction}

Given a $C^k$ contact form $\beta_+$, its Reeb vector field $R_{\beta_+}$ and a Legendrian knot $L$ for $\eta=\ker \beta_+$ Foulon and Hasselblatt describe in \cite{FoHa1} a family of contact surgeries of the \textit{Dehn} type that generates a new $C^k$ contact form $\tilde{\beta}_+$ in the new manifold $\tilde{M}$ and a $C^{k-1}$ vector field $R_{\tilde{\beta}_+}$. In a neighborhood $N$ of $L$ there is a coordinate system

$$(s,t,w)\in N= (-\delta,\delta)\times S^1 \times (-\epsilon,\epsilon),$$
where the parameters $(s,w)$ are defined on the \textit{surgery annulus} $C=\{0\}\times S^1 \times (-\epsilon,\epsilon)$ (see \fullref{fig:Anosov flow}). More precisely, $s \in S^1$ is the parameter of $L$ and $w$ belongs to some interval $(-\epsilon,\epsilon)$. The transverse parameter $t$ is such that the Reeb vector field of $\beta_+$ satisfies $R_{\beta_+}=\frac{\partial}{\partial t}$. In this coordinate system a contact form defining a contact structure takes the particularly simple expression $\gamma=dt+w\: ds$.

The surgery can be thought as first cutting the manifold $M$ along the annulus $C$ and then gluing back the two sides of the cut in a different way. In particular, we glue the point that on one side of the cut is described by coordinates $(s,w)$ to the one described by coordinates $(s+f(w),w)$ on the other side. Here $f:[-\epsilon,\epsilon]\rightarrow S^1$ is a non decreasing smooth function such that $f(-\epsilon)=0$ and $f(\epsilon)=2\pi q$. The shear map $F:C\rightarrow  C,\; (s,w)\rightarrow (s+f(w),w)$ is similar to the one defined in Goodman's construction and it produces new Reeb flows in $\tilde{M}$ for every $q\in \mathbb{Z}
$.

\label{AAA}
\subsubsection{Anosovity of the flows generated by Foulon and Hasselblatt construction}

Let $X$ be the generating vector field the geodesic flow $\phi^t$ on the unit tangent bundle of an hyperbolic surface $S$. Consider the knot $L$ obtained by rotating a closed geodesic along a fiber of an angle $\theta=\pi/2$. $L$ is Legendrian and a positive Foulon-Hasselblatt surgery yields a contact flow that is also Anosov. One important step of the proof of Anosovity of the flows generated by Foulon and Hasselblatt construction from a geodesic flow consists in showing that the cones constructed on one side of the surgery annulus are pushed by a positive shear map $F$ inside of the cones constructed on the other side of the surgery annulus (see \cite{FoHa1} and \cite{FoHa2}).

In a contact Anosov flow different from the geodesic  it is not clear if, after the application of the shear, the cones constructed on ones side of the surgery annulus will fit into the cones constructed on the other side, even when the Legendrian knot is transverse to the weak invariant foliations (see \fullref{EX}).

\section{Surgery on projectively Anosov flows}

\label{SPA}

Throughout the rest of the paper, we consider projectively Anosov flows and supporting bi--contact structures as equivalent objects. 

In \cite{FS1} we introduced a Dehn type surgery along a Legendrian-transverse knot $K$ in a $3$-manifold $M$ endowed with a bi--contact structure $(\xi_+,\xi_-)$, satisfying certain requirements near $K$. The procedure can be thought as first cutting the manifold $M$ along an annulus $A$ and then gluing back the two sides of the cut adding a $(1,q)$-Dehn twist. Contrarily to Goodman surgery this construction uses s surgery annulus that is tangent to the vector field $X$ defined by $(\xi_-,\xi_+)$. 

In this section we introduce a type of bi-contact surgery that uses a transverse annulus $C$ 
in a general projectively Anosov flow. The strategy for defining the surgery is similar to the one used in \cite{FS1}: we first find a suitable coordinate system where two contact forms $(\alpha_-,\alpha_+)$ defining the bi-contact structure take a simple expression. From now on, $\xi_-$ denotes the contact structure containing $TK$ while $\xi_+$ denotes the transverse one (see  \fullref{Transverse}).

\begin{lemma}
\label{coord}
There is a neighborhood $\Lambda=C\times (-\tau,\tau)$ of a Legendrian-transverse knot $K$, where $\tau\in \mathbb{R}^+$ and $C=S^1\times [-\epsilon,\epsilon]$, and coordinates $(w,s,t)$ in $\Lambda$ such that the $s$-curves with $s\in S^1$ and the $w$-curves with $w\in [-\epsilon,\epsilon]$, are given by the characteristic foliations induced on $C$ by the following pair of contact forms $(\alpha_-,\alpha_+)$ supporting $(\xi_-,\xi_+)$ 
$$\alpha_-=dw+a(t)\:ds$$
$$\alpha_+=ds+b(t)\:dw$$
with $\alpha_-=dw$, $\alpha_+=ds$ on $C$. Here $t\in (-\tau,\tau)$ is the parameter given by the flow of $X$. 

\end{lemma}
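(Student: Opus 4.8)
The plan is to produce the coordinate system in three stages: first the flow direction, then the two transverse directions on the annulus, and finally the normalization of the supporting forms. First I would use the flow: since $X$ spans the line field $\xi\cap\eta$, we have $\alpha(X)=\beta(X)=0$, and $C$ is transverse to $X$, so for small $\tau$ the flow of $X$ gives a diffeomorphism $C\times(-\tau,\tau)\to\Lambda_\tau$ in which $X=\partial_t$ and $C=\{t=0\}$. The crucial bookkeeping consequence is that $\iota_X\alpha=\iota_X\beta=0$ forces both forms to have no $dt$-component once the remaining coordinates $w,s$ are in place, i.e.\ $\alpha=P\,dw+Q\,ds$ and $\beta=R\,dw+S\,ds$.

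Next I would set up $w,s$ on $C$ from the two characteristic foliations $\xi\cap TC$ and $\eta\cap TC$. Each is nonsingular near $K$: since $X\in\xi,\eta$ but $X\notin TC$, neither plane equals $TC$, so each foliation is a genuine line field along $K$. They are also mutually transverse, because a common tangent would lie in $\xi\cap\eta\cap TC=\langle X\rangle\cap TC=\{0\}$. I then take the leaves of $\xi\cap TC$ to be the $s$-curves and the leaves of $\eta\cap TC$ to be the $w$-curves; as $K$ is Legendrian for $\xi$ we have $TK\subset\xi\cap TC$, so $K$ is a leaf of the first foliation and I set $K=\{w=0\}$, with $s\in S^1$ and $w\in[-\epsilon,\epsilon]$. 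By construction $\alpha$ annihilates $\partial_s$ and $\beta$ annihilates $\partial_w$ on $C$, and rescaling $\alpha,\beta$ by positive functions normalizes $\alpha(\partial_w)=\beta(\partial_s)=1$, so that $\alpha=dw$ and $\beta=ds$ on $C$; this records the boundary data $a(0)=b(0)=0$.

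To globalize, I want to keep $X=\partial_t$, which forces $w,s$ to be first integrals of $X$. The target identities $\alpha=dw+a\,ds$ and $\beta=ds+b\,dw$ are then equivalent to $dw=\dfrac{\alpha-a\beta}{1-ab}$ and $ds=\dfrac{\beta-b\alpha}{1-ab}$, so the whole problem reduces to finding functions $a(t),b(t)$ \emph{of $t$ alone} for which these two $1$-forms are closed; one then integrates them, adjusting the period so that $s\in S^1$. Geometrically this says the slopes of $\xi\cap TC_t$ and $\eta\cap TC_t$ in the flowed $(w,s)$-grid are constant on each slice $C_t=\phi^t(C)$. This uniformity is exactly where the projectively Anosov hypothesis must enter. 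Writing $\mathcal{L}_X\alpha=\iota_X d\alpha=\lambda\alpha+\mu\beta$ and $\mathcal{L}_X\beta=\kappa\alpha+\nu\beta$ (legitimate since both Lie derivatives annihilate $X$), the bi-contact/contact conditions give $\mu,\kappa\neq0$, and differentiating the slope functions along the flow via $\tfrac{d}{dt}\alpha(\partial_s)=d\alpha(X,\partial_s)$ produces a Riccati-type evolution in $t$ for the pair $(a,b)$ with initial data $a(0)=b(0)=0$, which I would solve on a possibly smaller interval $(-\tau,\tau)$.

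The hard part will be this $t$-only dependence: a priori the characteristic slopes vary from point to point within a slice, and it is only the coherent rotation of $\xi$ and $\eta$ toward each other along $X$ — together with the adapted choice of the transverse annulus $C$ through the Legendrian-transverse knot — that forces them to become position-independent after the flow identification. Once $a(t),b(t)$ are found and $w,s$ integrated, I would close the argument by checking that $(w,s,t)$ are honest coordinates on $C\times(-\tau,\tau)$, that the contact conditions $a'(t),b'(t)\neq0$ and the transversality $1-a(t)b(t)\neq0$ hold near $t=0$, and that the induced characteristic foliations on $C$ are precisely the $s$- and $w$-curves as claimed.
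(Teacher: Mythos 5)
Your first two stages coincide with the paper's proof: the flow of $X$ gives the product structure on $\Lambda_\tau$ in which neither form has a $dt$-component, and the characteristic bi-foliation of $(\xi,\eta)$ on $C$ gives coordinates $(w,s)$ with $\alpha=dw$, $\beta=ds$ on $C$. At that point the paper, like you, only gets $\alpha_2=dw+a_2(w,s,t)\,ds$ and $\beta_2=ds+b_2(w,s,t)\,dw$. The gap is your third stage. You keep $\xi$ and $\eta$ fixed and try to prove that the slope functions can be made to depend on $t$ alone, asserting that the projectively Anosov hypothesis ("the coherent rotation of $\xi$ and $\eta$ toward each other") forces this. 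That assertion is false, and you yourself flag it as "the hard part" without supplying an argument. Concretely, for a $1$-form $\alpha_2=dw+a_2\,ds$ whose kernel contains $\partial_t$, the contact condition is exactly $\partial_t a_2\neq 0$ (since $\alpha_2\wedge d\alpha_2=\partial_t a_2\,dw\wedge dt\wedge ds$); so, for instance, $a_2=t\bigl(1+\lambda(w,s)^2\bigr)$ is a perfectly good local model for any smooth $\lambda$, and nothing in the global dynamical hypothesis rules out such $(w,s)$-dependence near $K$. Your Riccati evolution along the flow only propagates the pointwise slope; it cannot erase dependence on the slice variables, and a $t$-independent change of the coordinates $(w,s)$ cannot simultaneously linearize an arbitrary slope field on every slice $C_t$. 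So the statement you are actually trying to prove — a normal form for the \emph{original} contact structures in suitable coordinates — is strictly stronger than the lemma and is not true in general.

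The missing idea, which is the real content of the paper's proof, is that you are allowed to deform the contact structures themselves, not merely the coordinates. Since $\ker\alpha_2$ and $\ker\beta_2$ both contain $\partial_t$ at every point of $\Lambda_\tau$, one can slide the contact planes of $\xi$ and $\eta$ independently and smoothly along the $t$-curves, i.e.\ replace $a_2(w,s,t)$ by a function $a(t)$ and $b_2(w,s,t)$ by $b(t)$ (with $a(0)=b(0)=0$), through plane fields whose kernels always contain $\partial_t$ and remain mutually transverse. Such an isotopy changes the plane fields but not their intersection line field, hence not the flow-lines of $X$; and since the paper treats a projectively Anosov flow and its supporting bi-contact structures as interchangeable objects, the deformed pair still "supports" the flow in the sense the lemma requires. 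A secondary omission: the paper does not take $C$ as an arbitrary transverse annulus but constructs it by flowing $K$ along the Reeb field $R_{\alpha_1}$ of a form defining $\xi$, which is what guarantees that the $s$-curves are closed and parallel to $\partial C$ while the $w$-curves run across $C$; your proposal leaves this adaptation of $C$ implicit.
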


\begin{figure}
\label{Transverse}

\includegraphics[width=0.9\textwidth]{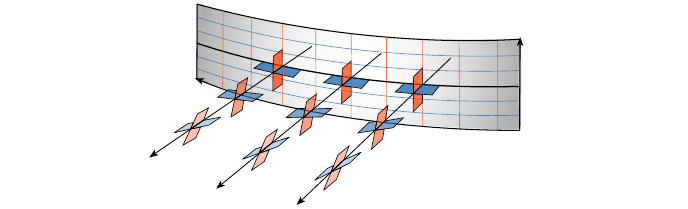}
  \caption{The surgery annulus $C$.}
\label{Transverse}
\end{figure}

\begin{proof}
Let $(\alpha_{1-},\alpha_{1+})$ be a pair of contact forms supporting $X$ and such that $TK\subset \ker \alpha_{1-}$. Consider the parameter $u$ defined by the flow $\phi^u$ of the Reeb vector field $R_{\alpha_{1-}}$ of $\alpha_{1-}$. The transverse surgery annulus $C$ is defined as follows. We start with the Legendrian-transverse knot $K:S^1\rightarrow M$ and we set $$C=\bigcup_{u\in[{-\nu,\nu}]}\phi^u(K)$$
for a sufficiently small $\nu\in \mathbb{R}^+$ so that $\phi^u(K)$ is transverse to $\xi_{1+}=\ker\alpha_{1+}$. The characteristic bi-foliation induced by the bi-contact structure $(\xi_{1-}=\ker \alpha_{1-},\xi_{1+}=\ker\alpha_{1+})$ is composed by two families of transverse curves on $C$. This induces a coordinate system $(w,s)$ where the $w$-curves, induced by the characteristic foliation of  $\xi_{1+}$ are transverse to $\partial C$, while the $s$-curves induced by  $\xi_{1-}$ are closed (since we used $R_{\alpha_{1-}}$ to span $C$) and parallel to $\partial C$. The transverse parameter $t$ is determined by the flow $\phi^t$ of $X$ and we set $t=0$ on $C$. Note that the $s$-curves are Legendrian for  $\xi_{1-}$ on $C$ while they are transverse to the contact planes of  $\xi_{1-}$ everywhere else in a neighborhood of $C$. The $w$-curves are instead  everywhere transverse to  $\xi_{1-}$ in a neighborhood of $C$, while the $t$-curves are Legendrian curves by definition. This implies that in a neighborhood of $C$ a contact form $\alpha_{2-}$ defining $\xi_{1-}$ can be written as
$$\alpha_{2-}=dw+a_2(w,s,t)\:ds,\;\;\;\alpha_{2-}=dw \;\;\;\text{on}\;\;\;C.$$
Similarly, the $s$-curves are everywhere transverse to $\xi_{1+}$ in a neighborhood of $C$, while the $w$-curves are Legendrian for $\xi_{1+}$ on $C$ and transverse everywhere else in a neighborhood of $C$, while the $t$-curves are Legendrian by definition. Therefore the contact structure $\xi_{1+}$ is defined by a contact form $\alpha_{2+}$ that can be locally written 
$$\alpha_{2+}=ds+b_2(w,s,t)\:dw,\;\;\;\alpha_{2+}=ds \;\;\;\text{on}\;\;\;C.$$
Rotating the contact planes of $\xi_{1+}$ and $\xi_{1-}$ independently and smoothly along the $t$-curves do not change the flow-lines. Therefore there is a pair of contact forms $(\alpha_-,\alpha_+)$ supporting $X$ that locally can be written as follows
$$\alpha_-=dw+a(t)\:ds,\;\;\;\alpha_+=ds+b(t)\:dw\;\;\;\text{with}\;\; t\in (-\tau,\tau),$$
with $a(0)=0$ and $b(0)=0$.
Note that since in the coordinate system $(s,t,w)$ the positive natural volume form is $dV=ds\wedge dt \wedge dw$, therefore $\alpha_-\wedge d\alpha_-=-a'\:dV$ while $\alpha_+\wedge d\alpha_+=b'\:dV$.

\end{proof}

\subsection{Definition of the shear and the perturbations}
\label{Deff}

\begin{figure}
\label{Perturbation}

\includegraphics[width=0.87\textwidth]{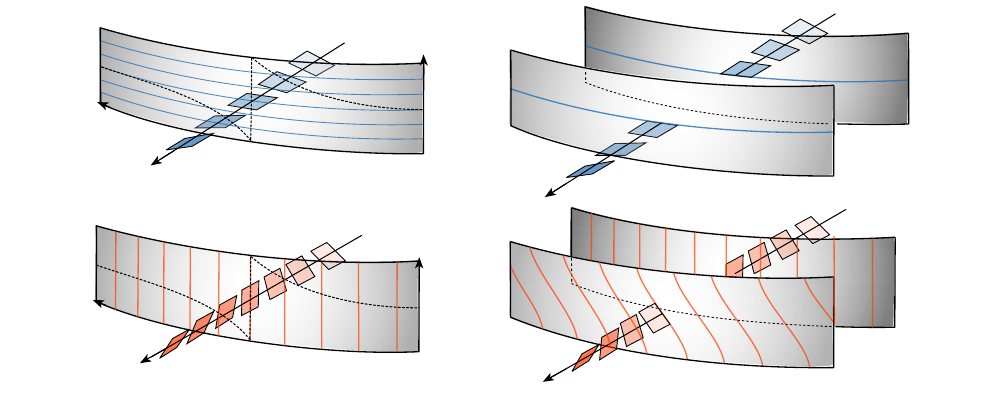}
  \caption{The shear $F$ does not preserve the contact forms. We introduce perturbations in order to produce a smooth plane field.}
\label{Perturbation}
\end{figure}

Without loss of generality we assume that $K$ is a Legendrian-transverse knot for the bi-contact structure $(\xi_-,\xi_+)$. Consider a coordinate system and an annulus $C$ as in \fullref{coord}. We define the shear along the surgery annulus $C$ similarly to \cite{FoHa1}, \cite{FoHa2} and \cite{FS1}. In particular $$f(w):[-\epsilon,\epsilon]\rightarrow S^1,\;\;\;v\rightarrow qg(w/\epsilon),$$
where $q\in \mathbb{Z}$  and $g:\mathbb{R}\rightarrow[0,2\pi]$ is a non-decreasing smooth function such that $g((-\infty,-1))=0$ and $g([1,\infty))=2\pi$ and $0\leq g'\leq4$. The shear $$F:C\rightarrow C,\;\;\; (s,w)\rightarrow (s+f(w),w),$$ identifies the point defined by the coordinates $(s,w)$ on the side of the annulus $C^-$ where the coordinate $t\leq 0$ to the point $(s+f(w),w)$ on the side of the annulus $C^+$ where $t\geq 0$. This operation  
defines a smooth $3$-manifold $\tilde{M}$. Since  we have  
$$F^*(\alpha_-)^+=dw+a(t)\:d(s+f(w))=\alpha+a(t)f'(w)\:dw,$$  $\alpha_-$ is not preserved by the shear. We perturb $\alpha_-$ to a 1-form $\tilde{\alpha}_-$ defined on $\tilde{M}\setminus C$ in such a way that the plane field distribution $\ker \tilde{\alpha}_-$ stays transverse to $\ker \alpha$ and defines a smooth contact structure after applying the shear. To this end we set $(\tilde{\alpha}_-)^-=(\alpha_-)^-$ and $(\tilde{\alpha}_-)^+=(\alpha_-)^+- \rho$ with 
$$\rho=\lambda_1(t)a(t)f'(w)\:dw,$$
where $\lambda_1:\mathbb{R}\rightarrow [0,1]$ is a smooth bump function with support in a neighborhood of $C$ defined by $t\in [0,\tau_1]$ and such that $\lambda(0)=1$. With this choice $\tilde{\alpha}_-$ is a 1-form defined on the whole new manifold $\tilde{M}$ as the following computation shows

$$F^*(\alpha_-)^+=dw+a(t)\:ds+a(t)f'(w)\:dw-a(t)f'(w)\:dw=(\alpha_-)^-.$$

Similarly, $\alpha_+$ is not preserved by the shear since on $C$ we have
$$F^*(\alpha_+)^-=d(s+f(w))+b(t)\:dw=ds+f'(w)\:dw.$$
We set $(\tilde{\alpha}_+)^-=(\alpha_+)^-$ and
$(\tilde{\alpha}_+)^+=(\alpha_+)^+- \sigma$ 
$$\sigma=\lambda_2(t)f'(w)\:dw$$
where $\lambda_2:\mathbb{R}\rightarrow [0,1]$ is a smooth bump function with support in $[0,\tau_2]$, such that $\lambda(0)=1$.
With this choice $\tilde{\alpha}_+$ is a 1-form defined on the whole new manifold since on $C$   $$F^*(\alpha_+-\sigma)^+=d(s+f(w))+(b(t)-\lambda_2(t)f'(w))\:dw=ds+b(t)\:dw=(\alpha_+)^-.$$

\begin{theorem}
\label{Bi}
Given a Legendrian-transverse knot in a bi-contact structure there is an infinite family of Dehn type surgeries that yields bi-contact structures in the new manifold.
\end{theorem}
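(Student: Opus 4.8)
The plan is to exhibit a bi-contact structure on the surgered manifold $\tilde M$ and then invoke Mitsumatsu's characterization of projectively Anosov flows as precisely those carried by a bi-contact structure. Concretely, I will show that the two globally defined $1$-forms $\tilde\alpha$ and $\tilde\beta$ produced by the shear $F$ and the perturbations $\rho,\sigma$ of \fullref{Deff} form a pair of transverse contact structures on $\tilde M$ for infinitely many $q$. Since $\tilde\alpha=\alpha$ and $\tilde\beta=\beta$ outside the support of the perturbations, where $(\alpha,\beta)$ is the original bi-contact pair, it suffices to verify the two contact conditions together with the transversality $\tilde\alpha\wedge\tilde\beta\neq0$ inside a small neighborhood of $C$. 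There I will work in the normal form $\alpha=dw+a(t)\,ds$, $\beta=ds+b(t)\,dw$ of \fullref{coord}, with $a(0)=b(0)=0$ and with $a'$, $b'$ nowhere vanishing (hence of constant sign, these being the contact conditions for $\xi$ and $\eta$).

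First I would compute the contact form of $\tilde\alpha$. Using the one-sided perturbation of \fullref{P2}, a direct calculation in which the terms linear in $a\,a'$ cancel yields
\[
\tilde\alpha\wedge d\tilde\alpha=\bigl(-a'(t)+\lambda_1'(t)\,a(t)^2\,f'(w)\bigr)\,dw\wedge ds\wedge dt.
\]
The decisive feature is the factor $a(t)^2$: since $a(0)=0$, on the support $|t|\le\tau_1$ one has $a(t)^2=O(\tau_1^2)$ while $|\lambda_1'|=O(1/\tau_1)$, so the perturbation term is $O(\tau_1\,q/\epsilon)$ and is dominated by $|a'|\ge\delta>0$ once the neighborhood is taken thin enough. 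Thus $\tilde\alpha$ is contact, with the same sign as $\alpha$, for every $q$.

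The analogous computation for $\tilde\beta$ gives
\[
\tilde\beta\wedge d\tilde\beta=\bigl(b'(t)+\lambda_2'(t)\,f'(w)\bigr)\,dw\wedge ds\wedge dt,
\]
and here lies the main obstacle: the perturbation term $\lambda_2'(t)f'(w)$ carries no compensating factor of $a$, so it has size $\sim q/(\epsilon\tau_2)$ and cannot be absorbed by shrinking the neighborhood (indeed it worsens as $\tau_2\to0$). I would resolve this by exploiting the freedom in the sign of $q$ together with the one-sided perturbation of \fullref{P2}. Choosing $\lambda_2$ monotone makes $\lambda_2'$ of constant sign, and since $f'(w)=(q/\epsilon)g'(w/\epsilon)$ has the sign of $q$ (as $g'\ge0$) and $b'$ has constant sign, one can select the sign of $q$ so that $b'(t)$ and $\lambda_2'(t)f'(w)$ have the same sign throughout the support. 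Then $b'(t)+\lambda_2'(t)f'(w)$ never vanishes, and this holds for all $q$ of that sign irrespective of its magnitude; this both produces the infinite family and explains the \emph{preferred direction} of the surgery.

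Finally I would check transversality via
\[
\tilde\alpha\wedge\tilde\beta=\bigl(1+a(t)\bigl(\lambda_1(t)f'(w)-b(t)-\lambda_2(t)f'(w)\bigr)\bigr)\,dw\wedge ds,
\]
which equals $dw\wedge ds$ on $C$ (because $a(0)=0$) and, since the bracketed correction is $O(\tau q)$, stays nonzero on a sufficiently thin neighborhood for each fixed $q$. Assembling the three estimates, for every $q$ of the preferred sign a thin enough choice of $\tau,\tau_1,\tau_2$ makes $(\tilde\alpha,\tilde\beta)$ a bi-contact structure on $\tilde M$, and Mitsumatsu's proposition then upgrades this to a projectively Anosov flow, completing the proof. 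The only genuinely delicate point is the $\tilde\beta$-condition, where no thinning of the neighborhood helps and the sign/monotonicity argument is indispensable; by contrast the $\tilde\alpha$-condition and the transversality are perturbative and hold for all $q$ once the neighborhood is narrow.
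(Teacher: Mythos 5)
Your proposal is correct and follows essentially the same route as the paper's proof: the $a(t)^2$ factor makes the $\tilde\alpha$-contact condition perturbative for every $q$ (with the neighborhood thickness shrunk according to $q$, exactly as in the paper's choice $\tau_1<\epsilon/(2\pi|q|)$), while the $\tilde\beta$-condition is handled by the same sign/monotonicity argument that singles out the preferred direction of the surgery and yields the infinite family of coefficients. Your explicit verification of $\tilde\alpha\wedge\tilde\beta\neq0$ is a small completeness bonus, since the paper asserts transversality of the perturbed plane fields but only writes out the two contact conditions.
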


\begin{proof}

The 1-form $\tilde{\alpha}_-$ for $t\geq 0$ is a contact form if and only if

$$\tilde{\alpha}\wedge d\tilde{\alpha}=\alpha \wedge d\alpha-\lambda'(t)a(t)^2 f'(w)\:dw \wedge ds\wedge dt\neq0.$$
If we deform the contact structure along the $t$-curves in such a way that $a(t)=t$ in a neighborhood of $C$ described by $t\in [0,\tau_1]$ the condition becomes
$$1-\lambda'(t)t^2 f'(w)=1-\lambda'(t)t^2 \frac{q}{\epsilon}\:g'(w/\epsilon)\neq 0.$$
where, similarly to \cite{FoHa1} we chose $\lambda_1:\mathbb{R}\rightarrow [0,1]$ smooth bump function with support in $[0,\tau_1]$ such that $\lvert \lambda'_1 \rvert\leq \pi/\tau_1$.
If we chose $$0<\tau_1<\frac{\epsilon}{\pi \left|q\right|},$$
we get 
$$\left| \lambda_1'(t)t^2 \frac{q}{\epsilon}\:g'(w/\epsilon)\right|\leq\left|\frac{\pi \tau_1q}{\epsilon}\right|<1,$$
therefore $\tilde{\alpha}\wedge d\tilde{\alpha}\neq0.$ Note that for every surgery coefficient $q\in \mathbb{Z}
$ the surgery produce a new contact 1-form.

The contact condition on 
$\tilde{\alpha}_+$ translates in the following inequality $$b'(t)-\lambda_2'(t) f'(w)=b'(t)-\lambda_2'(t) \:\frac{q}{\epsilon}\:g'(w/\epsilon)\neq0.$$
Since $\lambda_2'(t)\leq 0$, the contact condition is verified if $q\in \mathbb{N}$.

\end{proof}

\section{Dynamical  interpretation of the surgery} 

\label{Dy}

Hozoori shows in \cite{Hoz2} that if $\phi^t$ is a $C^1$-Hölder  Anosov flow with orientable weak invariant foliations, there is a special pair of supporting contact forms $(\alpha_-,\alpha_+)$ such that, in a neighborhood of a periodic orbit $\gamma$, the Reeb vector field $R_{\alpha_-}$ of $\alpha_-$ belongs to $\ker \alpha_+$. This implies that a closed orbit $\gamma$ can be pushed to a Legendrian-transverse knot $K=\phi^{\overline{w}}(\gamma)$ (were $\overline{w}>0$ small) using the flow $\phi^{w}$ of the vector field $R_{\alpha_-}$. Indeed, since $R_{\alpha_-}$ is Legendrian for $\ker \alpha_+$, the contact plane rotate all in the same direction along the flowlines of $R_{\alpha_-}$, while the planes of $\ker \alpha_-$ stay invariant.

\begin{corollary}[Hozoori \cite{Hoz2}]
Let $\phi^t$ be a $C^1$-Hölder Anosov flow on a $3$-manifold with orientable weak invariant foliations. There is a supporting bi-contact structure such a neighborhood of a periodic orbit $\gamma$ contains an isotopic Legendrian-transverse knot $K$.
\end{corollary}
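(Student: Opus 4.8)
The plan is to take Hozoori's special supporting pair $(\alpha,\beta)$ as the starting point and to produce $K$ as the push-off of $\gamma$ along the Reeb flow of $\alpha$. By the cited result we may assume that in a neighborhood of the periodic orbit $\gamma$ the Reeb vector field $R_\alpha$ lies in $\eta=\ker\beta$. Let $\psi^u$ denote the flow of $R_\alpha$ and set $K=\psi^{u_0}(\gamma)$ for a small parameter $u_0$ to be fixed later; for $u_0$ small enough $K$ stays in any prescribed tubular neighborhood of $\gamma$ and is isotopic to $\gamma$ through the Reeb flow. It then remains to check the two tangency conditions.

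First I would verify that $K$ is Legendrian for $\xi=\ker\alpha$. Since $\iota_{R_\alpha}\alpha=1$ and $\iota_{R_\alpha}d\alpha=0$, Cartan's formula gives $\mathcal{L}_{R_\alpha}\alpha=0$, so the Reeb flow preserves $\alpha$ and hence the distribution $\xi$. As $\gamma$ is an orbit of $X$ and $X\in\xi\cap\eta$, we have $T\gamma=\langle X\rangle\subset\xi$; applying $D\psi^{u_0}$ gives $TK=D\psi^{u_0}(T\gamma)\subset\xi$, so $K$ is tangent to $\xi$.

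The transversality to $\eta$ is the step that carries the content, and I would handle it by a first-order computation. Along $\gamma$ set $h(u)=\bigl((\psi^u)^*\beta\bigr)(X)=\beta\bigl(D\psi^u(X)\bigr)$, so that positive transversality of $K$ to $\eta$ amounts to $h(u_0)\neq0$ with the correct sign. Since $X\in\eta$ we have $h(0)=0$, and differentiating yields $h'(0)=(\mathcal{L}_{R_\alpha}\beta)(X)$. Using $\beta(R_\alpha)=0$ and Cartan's formula, $\mathcal{L}_{R_\alpha}\beta=\iota_{R_\alpha}d\beta$, hence $h'(0)=d\beta(R_\alpha,X)$. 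Now $X\in\xi=\ker\alpha$ while $R_\alpha\notin\ker\alpha$, so $R_\alpha$ and $X$ are linearly independent and, both lying in the $2$-plane $\eta$, form a basis of it. Because $\beta$ is a contact form, $d\beta$ is non-degenerate on $\eta$, whence $h'(0)=d\beta(R_\alpha,X)\neq0$. Therefore $h(u_0)\neq0$ for all small $u_0\neq0$, and choosing the sign of $u_0$ so that $h(u_0)>0$ makes $K$ positively transverse to $\eta$.

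The main obstacle is entirely contained in the input I am assuming, namely the production of a supporting pair with $R_\alpha\in\ker\beta$ near $\gamma$; granting this, the rest is the clean linear-algebra observation that the Reeb flow rotates $\eta$ at first order precisely because $d\beta$ pairs $R_\alpha$ and $X$ nontrivially on $\eta$. Combining the two verifications, $K$ is simultaneously Legendrian for $\xi$ and positively transverse to $\eta$, and isotopic to $\gamma$ inside the chosen neighborhood, which is exactly what the statement requires.
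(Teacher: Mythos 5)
Your proof is correct and takes essentially the same route as the paper: starting from Hozoori's special supporting pair with $R_\alpha\in\ker\beta$ near $\gamma$, you produce $K$ as the push-off of $\gamma$ along the Reeb flow of $\alpha$, exactly as the paper does. Your first-order computation $h'(0)=d\beta(R_\alpha,X)\neq 0$ (via non-degeneracy of $d\beta$ on $\eta$) is simply a rigorous rendering of the paper's informal justification that the planes of $\ker\beta$ rotate in one direction along the Reeb flowlines while $\ker\alpha$ stays invariant.
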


We call a knot $K$ obtained by flowing a closed orbit $\gamma$ to a Legendrian-transverse knot a {\it Legendrian-transverse push-off of $\gamma$}.
The above result allows us to construct the transverse annulus $C$ and to apply the surgery as in lemma \fullref{coord} and \fullref{Deff}. We now show that, when the flow is Anosov, there is a strong relation between our bi-contact surgery and Goodman's operation.

\begin{corollary}
Suppose that $\phi^t$ is an Anosov flow with orientable invariant foliations. Consider the Legendrian-transverse push-off $K$ of a periodic orbit $\gamma$. Let $C$ be a transverse annulus as in \fullref{coord} and let $F:C\rightarrow C$ be a shear as in \fullref{Deff}. With these choices, Goodman surgery generates the same flows (up to reparametrization) obtained by the bi-contact surgery of \fullref{Deff}.
\end{corollary}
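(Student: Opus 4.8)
The plan is to recognize both operations as \emph{the same} cut-and-reglue of $M$ along the transverse annulus $C$ by the shear $F$, and then to check that the perturbations introduced in \fullref{Deff} move the orbits of $X$ only through this regluing. First I would identify the underlying smooth surgeries. The push-off $K$ produced by Hozoori's corollary sits inside a flow-box neighborhood of $\gamma$, and the annulus $C$ of \fullref{coord} is swept out by the Reeb flow of $\alpha$; since $C$ is transverse to $X$ (the parameter $t$ is the flow parameter and $C=\{t=0\}$) and its boundary $\partial C$, made of $s$-curves, is parallel to the weak invariant manifolds, $C$ is up to isotopy a Goodman surgery annulus for $\gamma$. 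Both constructions cut along $C$ and reglue the two copies by $F\colon(s,w)\mapsto(s+f(w),w)$ with the same integer $q$, and both record the framing from $\partial C$; hence they yield the same $(1,q)$-Dehn surgery, and I may fix one manifold $\tilde M$ and identify the collars. One must only track orientation conventions so that the admissible sign of $q$ in \fullref{Bi} (tied to whether $\eta$ is the positive or negative member of the pair) matches the quadrant convention fixing Goodman's surgery direction.

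Next I would pin down the vector field produced by the bi-contact surgery. The decisive observation is that the perturbations enter only through $dw$: since $\rho$ and $\sigma$ are multiples of $dw$, the coefficients of $ds$ in $\tilde\alpha$ and $\tilde\beta$ remain $a(t)$ and $1$ respectively. Writing a tangent vector as $v_w\partial_w+v_s\partial_s+v_t\partial_t$ and imposing $\tilde\alpha(v)=\tilde\beta(v)=0$, the two equations force $v_w=v_s=0$ wherever the associated $2\times2$ determinant, which equals $1-a(t)b(t)$ up to terms vanishing on $C$, is nonzero; as $a(0)=b(0)=0$, this holds throughout a sufficiently thin collar about $C$. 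Thus $\tilde X=\ker\tilde\alpha\cap\ker\tilde\beta$ is proportional to $\partial_t$ on the whole collar: it is everywhere positively transverse to the annuli $C_t=C\times\{t\}$, it points in the flow direction, and it coincides with $X$ outside the support of the perturbations. In short, the perturbations make the plane fields $\ker\tilde\alpha$ and $\ker\tilde\beta$ glue up smoothly while leaving the oriented line field $\langle X\rangle$ untouched away from the seam $\{t=0\}$.

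It then remains to compare the two flows. Since $\tilde X\propto\partial_t$ in the cut coordinates and the gluing satisfies $F_*\partial_t=\partial_t$, the net twisting is concentrated at the seam, so the transition (first-return) map of $\tilde X$ from $C_{-\tau}$ to $C_{+\tau}$ is exactly $F$. Goodman's reglued field has, by construction of the shear, the same transition map $F$ and likewise equals $X$ near the ends of the collar. A flow transverse to the foliation $\{C_t\}$ that is standard near both ends is determined up to reparametrization by this transition map: sending each $\tilde X$-orbit to the Goodman orbit through the same point of $C_{-\tau}$ defines a slice-preserving diffeomorphism of the collar that is the identity near the boundary, hence extends by the identity to an orbit equivalence of $\tilde M$ isotopic to the identity. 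This yields the asserted coincidence of the two flows up to reparametrization.

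The hard part will be the middle step: one must verify that the perturbations $\rho,\sigma$, introduced solely to smooth the contact forms across the surgery, genuinely do not rotate the intersection line field off the $\partial_t$-direction, so that the entire dynamical effect of the bi-contact operation reduces to the topological regluing by $F$ and matches Goodman's shear. The remaining bookkeeping, namely the agreement of framings and the precise sign of $q$ under the two orientation conventions, is routine once this is secured.
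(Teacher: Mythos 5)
Your proposal is correct and follows essentially the same route as the paper: the paper's (much terser) proof rests precisely on the observation that the perturbations $\rho,\sigma$ do not change the flow lines, so that for the same annulus $C$ and shear $F$ the bi-contact surgery reconstructs exactly the flow lines of Goodman's construction. Your determinant computation showing that $\ker\tilde\alpha\cap\ker\tilde\beta$ stays proportional to $\partial_t$ (since both perturbations are multiples of $dw$), together with the transition-map comparison, is just a detailed verification of that one-line claim, so there is no substantive difference in approach.
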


\begin{proof}
The perturbation of the bi-contact structure defined in \fullref{Deff} does not change the flow lines. Therefore for the same choice of the shear and the surgery annulus, the bi-contact surgery reconstructs the same flow lines of Goodman's construction and vice versa (see \fullref{Goodmann} and \fullref{Transverse}).
\end{proof}

\section{Construction of  Lyapunov--Lorentz metrics using bi-contact geometry}

\label{Anosovity}

In this section we use bi-contact geometry to construct Lorentzian metrics that satisfy Barbot's version of the cone criterion for hyperbolicity.

\begin{theorem}[\cite{Bar1}]
\label{prop:Barbot}
A $C^1$ flow $\phi^t:M\rightarrow M$ of a $3$-manifold is an Anosov flow if and only if there are two continuous Lorentz metrics $Q^+$ and $Q^-$ on $M$ and constants  $a,b,c,T>0$ such that
\end{theorem}

\begin{enumerate}
\item {\it for all} $w\in T_xM, t>T$, if $Q^\pm(w)>0$ {\it then} $Q^\pm(D_x\phi^{\pm t}(w))>ae^{ bt}Q^\pm(w),$

\label{cond:b1}

\item $\mathcal{C}^+\cap \mathcal{C}^-=\varnothing$ {\it where} $\mathcal{C}^\pm$ {\it is the} $Q^{\pm}$-{\it positive cone},

\label{cond:b2}

\item $Q^\pm(X)=-c$ {\it where} $X$ {\it is the generating vector field},

\label{cond:b3}

\item $D_x\phi^{\pm t}(\overline{\mathcal{C}^\pm(x)})\setminus\{0\}\subset \mathcal{C}^\pm(\phi^{\pm t}(x))$ {\it for all} $t>0$ {\it and all} $x.$

\label{cond:b4}
\end{enumerate}

Lorentzian metrics as above are called Lyapunov. In order to explicitly construct a pair of Lyapunov--Lorentz metrics induced by the bi-contact structure we introduce the following results by Hozoori.
\begin{proposition}[\cite{Hoz}]
\label{p2}
Let $X$ be an Anosov flow on a 3-manifold $M$ and let $r_s$ and $r_u$ be its expansion rates. There is a metric on $TM$ such that if $\eta=E^{ss}\oplus E^{uu}$ and $e_s\in E^{ss}$, $e_u\in E^{ss}$ are unit vectors, we have
$$D_x\phi^t(e_s)=e^{\int_0^tr_s(\tau)\:d\tau}e_s,$$
$$D_x \phi^t(e_u)=e^{\int_0^tr_u(\tau)\:d\tau}e_u.$$
\end{proposition}
We recall that the expansions rates as described above satisfy the inequality $r_s<0<r_u$. 

\begin{proposition}[\cite{Hoz}]
\label{p1}
In the hypotesis of \fullref{p2} there are $C^1$ 1-forms such that
 $$ \alpha_s(e_s)=1,\;\;\;\ker \alpha_s=E^u \oplus \langle X\rangle, $$
$$ \alpha_u(e_u)=1,\;\;\;\ker \alpha_u=E^s \oplus \langle X\rangle. $$
Moreover, the $C^1$ 1-forms $$\alpha_+=\alpha_u-\alpha_s,$$ $$\alpha_-=\alpha_u+\alpha_s,$$
are contact.
\end{proposition}

We use \fullref{p2} and \fullref{p1} to prove the following.

\begin{theorem}
\label{LL}
Let $\phi^t$ be an Anosov flow (transitive or not) with orientable weak foliations and generating vector field $X$. There is a pair of contact forms $(\alpha_-,\alpha_+)$ defining $X$ such that the pair of Lorentzian metrics 
$$Q^{\pm}=\pm \alpha_-\otimes \alpha_+-\beta^2$$
is Lyapunov.  
Here $\beta$ is the canonical invariant $1$-form such that $\ker\beta=E^{uu}\oplus E^{ss}$ and $ \beta(X)=1$.

\end{theorem}

\begin{proof}
Conditions \ref{cond:b2}, \ref{cond:b3} and \ref{cond:b4} easily follow from the definition of $Q^{\pm}$.
We show that the contact forms defined in \fullref{p1} satisfy condition \ref{cond:b1}.

Set $w=AX+Be_s+Ce_u$ where $e_s$ and $e_u$ are unit vectors respectively in $E^{ss}$ and $E^{uu}$. Since we have 
$$Q^{+}(w)=C^2-B^2-A^2,$$
and by \fullref{p2}  $$Q^+(D_x\phi^t(w))=C^2e^{2\int_0^tr_u(\tau)\:d\tau }-B^2e^{2\int_0^tr_s(\tau)\:d\tau}-A^2$$
$$\geq C^2e^{2\int_0^tr_u(\tau)\:d\tau }-B^2e^{2\int_0^tr_u(\tau)\:d\tau}-A^2e^{2\int_0^tr_u(\tau)\:d\tau }= e^{2\int_0^tr_u(\tau)\:d\tau }Q^{+}(w).$$ 

 For $Q^{-}$ we have 

$$Q^{-}(w)=-C^2+B^2-A^2,$$
by \fullref{p2} we have $$Q^-(D_x\phi^t(w))=-C^2e^{2\int_0^{-t}r_u(\tau)\:d\tau }+B^2e^{2\int_0^{-t}r_s(\tau)\:d\tau}-A^2$$
$$\geq -C^2e^{2\int_0^{-t}r_s(\tau)\:d\tau }+B^2e^{2\int_0^{-t}r_s(\tau)\:d\tau}-A^2e^{2\int_0^{-t}r_s(\tau)\:d\tau }= e^{2\int_0^{-t}r_s(\tau)\:d\tau }Q^{-}(w)=e^{2\int_0^{t}-r_s(\tau)\:d\tau }Q^{-}(w).$$ 

If we set $k=\min\{\inf(\{r_u(t)\}_{t>0}),\inf(\{-r_s(t)\}_{t>0})\}$, we have

$$Q^{\pm}(D_x\phi^t(w))\geq e^{2\int_0^t k\:d\tau }Q^{\pm}(w)\geq e^{bt }Q^{\pm}(w).$$

\end{proof}

\begin{figure}

\includegraphics[width=0.9\textwidth]{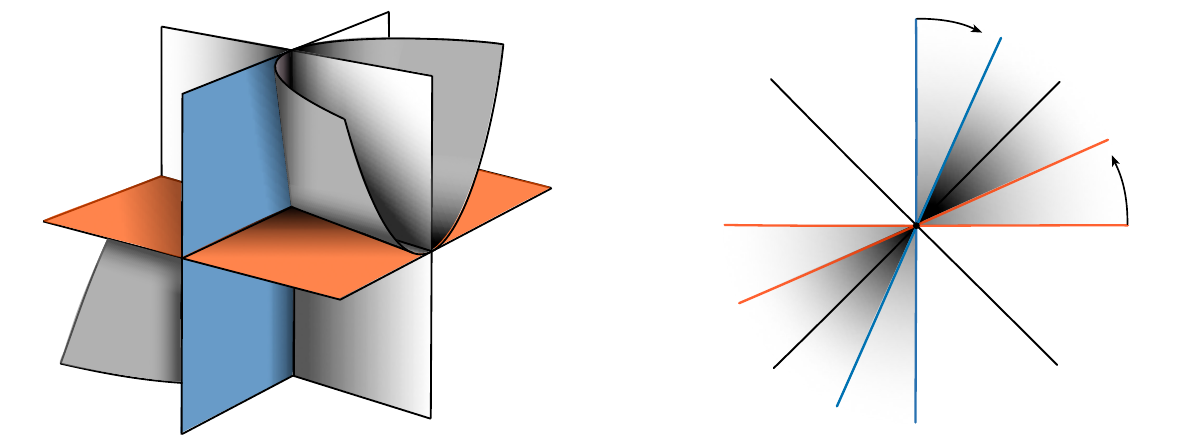}
  \caption{The bi-contact structure is represented by the red and blue planes. The white plane is the preserved $C^0$ plane field $\ker \beta $.}
\label{fig:Coordinates1}
\end{figure}

Hozoori shows in \cite{Hoz3} that there is a reparametrization of a volume preserving Anosov flow $X$ called Liouville reparametrization and denoted by $X_L$ such that for a pair of contact forms $\alpha_-$ and $\alpha_+$ the  $C^0$ plane field defined by the $\langle R_{\alpha_-}, R_{\alpha_+}\rangle$ is preserved by $X_L$. Therefore we have the following.
\begin{corollary}
\label{LL2}
If an Anosov vector field $X$ is volume preserving there is a reparametrization $X_L$ and a pair of contact forms $(\alpha_-,\alpha_+)$ supporting $X_L$ with Reeb vector fields $(R_{\alpha_-},R_{\alpha_+})$ such that the pair of Lorentzian metrics 
$$Q_{X_L}^{\pm}=\pm \alpha_-\otimes \alpha_+-\alpha_{X_L}^2$$
is Lyapunov.  
Here $\alpha_{X_L}$ is the  $1$-form such that $\ker\alpha_{X_L}=\langle R_{\alpha_-} ,R_{\alpha_+} \rangle$ and $ \alpha_{X_L}(X_L)=1$.
\end{corollary}

\begin{remark}
\label{EX}
Consider a general contact Anosov flow and let be $(\alpha_-,\alpha_+,\beta_+)$ as in \fullref{LL}. Consider a Legendrian knot $L$ for the preserved contact form $\beta_+$ and suppose that $L$ is also transverse to the weak invariant foliations. By Foulon and Hasselblatt construction there is a coordinate system $(w,s,t)$ and a transverse annulus $C$ such that $\beta_+=dt+w\:ds.$
The contact forms defining the flow take the expressions
$$\alpha_-=a_1\:dw+a_2\:ds,$$
$$\alpha_+=b_1\:ds+b_2\:dw.$$

Since the knot is transverse to the weak invariant foliations we assume without loss of generality that the $w$-curves coincide with the leaves of the characteristic foliation induced on $C$ by $\xi_+$. Therefore we have $b_1(w,s,t=0)$ and the metrics of \fullref{LL} on $C$ can be expressed as

$$Q^{\pm}=\pm (k_1\:dw\:ds+k_2\:ds^2)-(dt+w\:ds)^2$$ 
 
where $k_1=b_1(w,s,t=0) a_1(w,s,t=0)$ and $k_2=b_1(w,s,t=0) a_2(w,s,t=0)$.
As mentioned in \fullref{AAA} it is not in general clear if the shear reconstruct a strictly contracting cone field. An example of the issues we might encounter is the following. Consider the generic vector $v=W\:\frac{\partial}{\partial w}+\:S\frac{\partial}{\partial w}$ on $TC$. The traces of these cones on $C$ along the knot $L$ (therefore $w=0$) satisfies the equation
$$s\:(k_1\:w+k_2\:s)=0.$$
This corresponds in the plane $sw$ to the vertical axis and the line $l$ given by the equation $k_1\:w+k_2\:s=0.$
A positive shear rotates $l$ clockwise if the slope of $l$ is positive and counterclockwise if the slope of $l$ is negative. If the slope of $l$ is positive and the surgery coefficient is large enough, it is not possible to reconstruct a new bi-contact structure in the new manifold, therefore there is not a new strictly contracting cone field satisfying the cone criterion of hyperbolicity.

In the case of the geodesic flow the metrics on $C$ take the expression
$$Q^{\pm}=\pm k_1(w)\:dw\: ds-(dt+w\:ds)^2.$$ Note that along the Legendrian knot $L$ the metrics reduce to the ones used by Foulon and Hasselblatt in \cite{FoHa1}. In particular the traces of the cones on $C$ show up as quadrants ($sw=0$). If the shear is positive it pushes the axis $s=0$ inside the cones while preserves the axis $w=0$. Therefore the new cone field is strictly contracting. Note that the trace of the cones show up as quadrants if $L$ is a Legendrian knot for both $\xi_+$ and $\eta_+$. In the next section we show that being Legendrian for both $\xi_-$ and $\eta_+$ is a sufficient condition that allows us to produce new contact Anosov flows by surgery along $L$. 
\end{remark}

\begin{figure}

\includegraphics[width=0.9\textwidth]{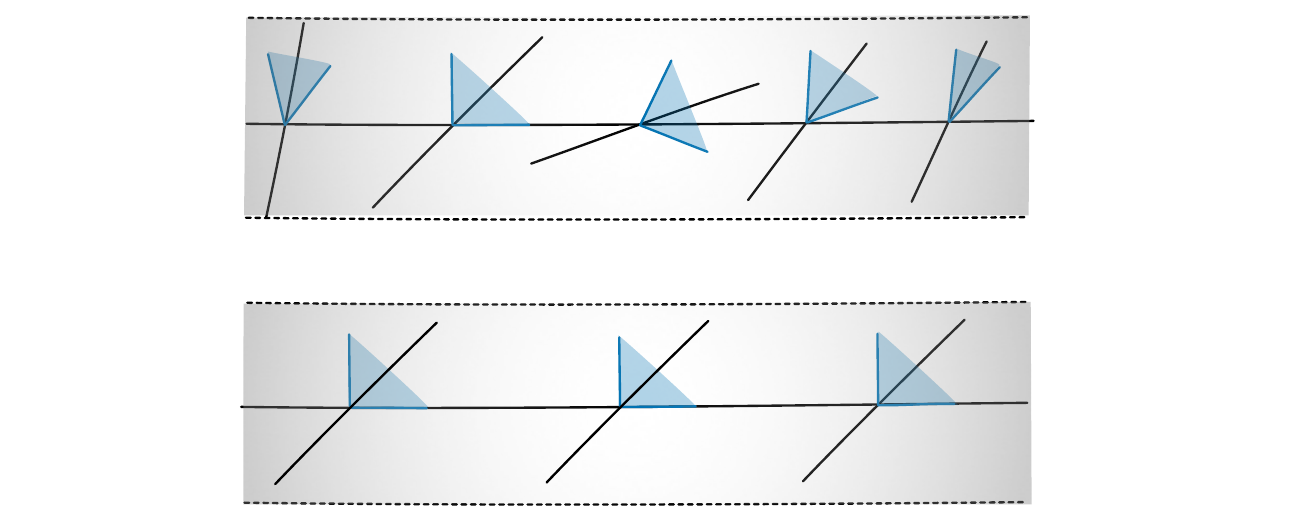}
  \caption{On the top: the surgery annulus centered on a knot transverse to the weak invariant foliations in a general contact Anosov flow. In blue the traces of the cones constructed around one of the weak invariant foliations (in black). On the bottom: the situation on the geodesic flow. The cones show up as quadrants.}

\end{figure}

\section{New contact Anosov flows from any contact Anosov flow} 

In this section we show that in every contact Anosov flow there are families of Legendrian knots that can be used to generate new contact Anosov flows by surgery and we study their abundance. We start to show the following.

\begin{proposition}
\label{thm:71}
Let $\phi^t$ be any contact Anosov flow defined by the bi-contact structure $(\xi_-,\xi_+)$ and preserving a positive contact structure $\eta_+$. Consider a knot $L$ that is Legendrian for the pair of contact structures $(\eta_+,\xi_-)$ (or for the pair $(\eta_+,\xi_+)$). The knot $L$ is Legendrian-transverse for $(\xi_-,\xi_+)$.
\end{proposition}

\begin{proof} 

Since $TL\in \xi_-$ (or $TL\in \xi_+$) by assumption, we are left to show that $L$ is transverse to $\xi_+$ (or $\xi_-$). 
Suppose $L$ is not transverse to $\xi_+$ (or $\xi_-$), there is a $p\in L$ such that $T_pL\subset \xi_+$ (or $T_pL\subset \xi_-$). Therefore $T_pL\subset (\xi_-\cap \xi_+)_p$ and since $( \xi_-,\xi_+)$ supports a Reeb flow of $\eta_+$ we have that $T_pL$ is transverse to $\eta_+$ in $p$ contradicting the fact that $L$ is Legendrian for $\eta_+$. 
\end{proof}
 
\begin{remark}
Consider the geodesic flow on the unit tangent bundle of an hyperbolic surface with the orientation that makes it preserving a positive contact form. The Legendrian knots obtained by rotating a closed orbit of an angle of $\pi/2$ are orbits of the flow defined by the pair $(\eta_+,\xi_-)$. It is well known that the flow defined by $(\eta_+,\xi_-)$ is Anosov and homotopic to the geodesic flow. On the other hand, the fibers of the Seifert fibration are all the trajectories of the flow defined by the pair of (positive) contact forms $(\eta_+,\xi_+)$. 
\end{remark}
Since a Legendrian knot as in \fullref{thm:71} is also transverse to the weak invariant foliations the question of their abundance is strictly related to a similar one asked by Foulon and Hasselblatt (\cite{FoHa1} section 5.3) that can be paraphrased as follows. 
\begin{question}
\label{Q}
Given a contact Anosov flow preserving a contact structure $\eta$ and a knot $K$, is there a Legendrian knot $L$ for $\eta$ that is transverse to the weak invariant foliations and that is isotopic to the given knot $K$?  
\end{question} 

It is known that a knot $K$ in a contact manifold is $C^0$-close to a Legendrian knot. The property of being transverse to the weak invariant foliations introduce some topological restrictions to the isotopy class of $K$ (e.g. $K$ cannot be topologically trivial since it is transverse to a taut foliation).

To study the abundance of the knots of \fullref{thm:71} we first notice that they can be interpreted as the periodic orbits of a pA flow $\psi^t$ defined by the bi-contact structure $(\xi_-,\eta_+)$ (or $(\xi_+,\eta_+)$).

\begin{proposition}[Proposition 1.1 in the introduction]
\label{74}
Let $(\xi_-,\xi_+)$ a bi-contact structure defining a contact Anosov flow $\phi^t$ that preserves a contact structure $\eta_+$. The $\psi^t$ defined by the bi-contact structure $(\xi_-,\eta_+)$ is Anosov.  
\end{proposition}

\begin{proof} 
 Since $X$ is Reeb for $\eta_+$ and $X\subset \xi_-$ by Hozoori's criterion the bi-contact structure $(\xi_-,\eta+)$ is Anosov.

\end{proof}

A consequence of \fullref{74} is that in any contact Anosov flow there are infinitely many Legendrian knots transverse to the weak invariant foliations.

 We finally prove the main result of this section.

\begin{theorem}
\label{thm:7}
Let $\phi^t$ be any contact Anosov flow defined by the bi-contact structure $(\xi_-,\xi_+)$ and preserving a positive contact structure $\eta_+$. Consider a knot $L$ Legendrian for the pair of contact structures $(\eta_+,\xi_-)$ (or for $(\eta_+,\xi_+)$). A $(1,q)$-bi-contact surgery along $L$ yields a contact Anosov flow for every $q\in \mathbb{N}$.
\end{theorem}

\begin{proof}
We prove the statement for a knot $L$ Legendrian for the pair of contact structures $(\eta_+,\xi_-)$. Since $L$ is Legendrian for $\eta_+$ and transverse to $\xi_+$ we can construct a smooth vector field directed by $(\eta_+,\xi_+)$ and use it to flow $L$ to generate an annulus $C$. On $C$ there is a coordinate coordinate system $(s,w)$ where $s$ is the coordinate describing the knot and the $w$-curves are directed by $(\eta_+,\xi_+)$. Moreover, the parameter $w$ can be chosen such that $\beta_+=dt+w\:ds$ (see \cite{FoHa1}), where $t$ is the parameter of the Anosov flow $X$. Since $L$ is a Legendrian knot for $\xi_-$ we can use the flow of $X$ to construct a flow box bounded by $\phi^{-\tau}(C)=C_{in}$ and $\phi^{\tau}(C)=C_{out}$ for $\tau$ sufficiently small. If $C$ is sufficiently thin, the caracteristic foliation induced by $\xi_-$ on $C_{in}$ has positive slope, while the one induced on $C_{out}$ has negative slope. Therefore there is a pair of supporting contact forms that in a flow box neighborhood of  $ C$ can be expressed as $\alpha_-=dw+a(t)\:ds$ and $\alpha_+=ds+b(t)\:dw$
with $a(0)=0,\:b(0)=0$. This implies that $L$ is Legendrian for $\ker \alpha_-$ and $\ker \beta_+$ and we can perform a $(1,q)$-Legendrian-transverse  surgery on $(\alpha_-,\alpha_+)$ using the same shear of Foulon and Hasselblatt construction. For positive surgery coefficients, the resulting vector field is contact and projectively Anosov and therefore Anosov by \cite{Hoz3}.

\end{proof}

\section{Applications to surgery on contact structures} 

\label{contact}

The formulation of Goodman surgery in terms of bi-contact structures clearly shows the relationships between the contact structures supporting the initial vector field $X$ and the ones defining the new vector field $\tilde{X}$. In the following we summarize these relations. We remark that the same statements hold for the bi-contact surgery introduced in \cite{FS1}.

\begin{proposition}
Suppose that we have a Legendrian-transverse knot $K$ in a bi-contact structure $(\xi_-,\xi_+)$. If $q>0$, the $(1,q)$-surgery of \fullref{Deff} acts as a {\it Legendrian surgery} on $\xi_-$ while it acts as a inadmissible transverse surgery on $\xi_+$.

Viceversa, suppose that $K$ is Legendrian for $\xi_+$ and transverse to $\xi_-$. 
 If $q<0$ a $(1,q)$-surgery acts as a {\it Legendrian surgery} on $\xi_+$ while acts as an inadmissible transverse surgery on $\xi_-$.

The surgery coefficient $q$ is defined with respect to the contact framing induced by Legendrian contact structure.
\end{proposition}

\begin{proof}
Suppose that $K$ is Legendrian-transverse for $(\xi_-,\xi_+)$. There are neighborhood $N_L\subset N$ and $N_T\subset N$ contactomorphic respectively to a standard neighborhood of a Legendrian knot and to a standard neighborhood of a transverse knot (see \fullref{Dehn}). Perform the $(1,q)$-bi-contact surgery along $K$ applying the deformation of the contact structures in a neighborhood $N'\subset N_L\cap N_T$. After surgery consider the solid tori $\tilde{N}_L$ and $\tilde{N}_T$. These are neighborhoods of a Legendrian knot and a transverse knot respectively. Moreover $\partial{\tilde{N}_L}=\partial N_L$ and $\partial{\tilde{N}_T}=\partial N_T$. Consider on both $\partial{N_L}$ and $\partial{N_T}$ a preferred longitude $\lambda$ given by the contact framing of $\xi_-$ (the contact structure tangent to the knot). Since after a $(1,q)$-surgery along the annulus $C$ a  $\mu+q\lambda$ curve is contractible, the surgery of \fullref{Deff} acts as a $(1,q)$-Legendrian surgery on $\xi_-$ and as a $(1,q)$-transverse surgery on $\xi_+$. The surgery coefficient $q$ is defined with respect to the contact framing induced by $\xi_-$.

\end{proof}

Since the supporting bi-contact structures of an Anosov flow are tight (hypertight as shown by Hozoori in \cite{Hoz}), have the following (hyper)tightness results for classic contact and transverse surgeries. As usual, let $q$ be an integer.

\begin{corollary}
\label{Cor3}
Let $K$ be a Legendrian-transverse knot in a bi-contact structure $(\xi_-,\xi_+)$ defining an Anosov flow. Every inadmissible transverse $(1,q)$-surgery on $\xi_+$ along $K$ yields a hypertight contact structure $\tilde{\xi}_+$.
\end{corollary}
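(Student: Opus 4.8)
The plan is to reduce the statement to the hyperbolicity criterion of \fullref{Anosovity1} together with Hozoori's hypertightness theorem \cite{Hoz}. The key observation is that an inadmissible transverse $1/q$-surgery on the positive contact structure $\eta$ along $K$ is nothing other than the effect on $\eta$ of a bi-contact $(1,q)$-surgery of \fullref{Deff}. Once this identification is in place, the Anosovity of the resulting flow forces $\tilde{\eta}$ to be hypertight, and the rest is a chain of implications already available in the paper.

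First I would realize the given surgery as the $\eta$-component of a genuine bi-contact surgery. Place $K$ in a standard neighborhood as in \fullref{coord}, so that $\xi$ and $\eta$ are defined by $\alpha=dw+a(t)\,ds$ and $\beta=ds+b(t)\,dw$, with the coefficient $1/q$ measured against the contact framing induced by $\xi$. For $\eta$ positive and $q>0$, the shear $F$ together with the perturbation $\sigma$ of \fullref{Deff} adds twisting to $\eta$, which is precisely the data of an inadmissible transverse $1/q$-surgery; the amount of twisting is fixed by the integer $q$ and by the rotational model of the standard transverse neighborhood. To promote this to a full bi-contact surgery I would complete the $\xi$-side: by the proof of \fullref{Bi} the perturbed form $\tilde{\alpha}=\alpha+\rho$ is a contact form for every $q\in \mathbb{Z}$, and for $q\in \mathbb{N}$ with $\eta$ positive the pair $(\tilde{\alpha},\tilde{\beta})$ remains transverse. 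Hence the prescribed inadmissible transverse surgery on $\eta$ is realized as the $\eta$-factor of a bi-contact $(1,q)$-surgery producing a bi-contact structure $(\tilde{\xi},\tilde{\eta})$.

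Next I would run the dynamical chain. By \fullref{Bi} the pair $(\tilde{\xi},\tilde{\eta})$ supports a projectively Anosov flow on the surgered manifold. Since the original flow is Anosov, \fullref{Anosovity1} upgrades this automatically: the new flow is not merely projectively Anosov but Anosov. Finally, Hozoori's theorem \cite{Hoz} asserts that the bi-contact structure supporting any Anosov flow is hypertight, i.e. each of its two contact structures admits a defining form whose Reeb field has no contractible periodic orbit. Applying this to the positive member $\tilde{\eta}$ of $(\tilde{\xi},\tilde{\eta})$ yields the claim.

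The hard part will be the identification step, namely showing that \emph{every} inadmissible transverse $1/q$-surgery on $\eta$ — and not just the particular model produced by one choice of shear and perturbation — is captured by the bi-contact construction. Concretely, one must check that the slope and the number of half-twists prescribed by an arbitrary inadmissible transverse $1/q$-surgery agree with those produced by $F$ and $\sigma$ for the corresponding $q$, after normalizing to the framing induced by $\xi$; Conway's correspondence \cite{Con1} between transverse and contact surgery is the natural tool to pin this down. Once the surgery data are matched, the remaining steps are formal consequences of \fullref{Bi}, \fullref{Anosovity1}, and \cite{Hoz}.
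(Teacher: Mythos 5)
Your proof is correct and follows essentially the same route as the paper: realize the inadmissible transverse $1/q$-surgery (necessarily $q>0$ since $\eta$ is positive and the surgery adds twisting) as the $\eta$-component of a bi-contact $(1,q)$-surgery, conclude via \fullref{Bi} and \fullref{Anosovity1} that the new flow is Anosov, and apply Hozoori's hypertightness theorem to $\tilde{\eta}$. The identification step you flag as the ``hard part'' is precisely the step the paper dispatches in a single sentence, so your elaboration via Conway's correspondence merely makes explicit what the paper leaves implicit.
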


\begin{proof}
Every inadmissible transverse $(1,q)$-surgery on $\xi_+$ along $K$ can be realized by a $(1,q)$-bi-contact surgery defined in \cite{FS1} with coefficient $q>0$. This yields a bi-contact structure and therefore an Anosov flow (see \cite{FS1}), thus $\tilde{\xi}_+$ is hypertight. 
\end{proof}

The following result of Conway allows us to translate  \fullref{Cor3} in terms of positive contact surgeries.
\begin{theorem}[\cite{Con1}]
\label{Con}
Every inadmissible transverse surgery on a transverse knot corresponds to a positive contact surgery on a Legendrian approximation.
\end{theorem}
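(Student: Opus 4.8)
The plan is to exhibit both operations---inadmissible transverse surgery on the transverse knot $T$, and positive contact surgery on a Legendrian approximation $L$ of $T$---as the excision of a standard contact solid torus followed by the regluing of a tight contact solid torus with prescribed boundary data, and then to identify the two outcomes by the uniqueness of tight fillings (\fullref{ka}). First I would fix $T$ together with a Legendrian approximation $L$, using that the positive transverse push-off of $L$ is transversely isotopic to $T$ and that $L$ sits inside the standard rotationally symmetric neighborhood $\nu T \cong S^1 \times D^2$ carrying $\xi_{rot}$. In these coordinates I would record, for each operation, the dividing set (equivalently the characteristic foliation) that it produces on the boundary torus of the region being refilled; the point is that after the contact cut the inadmissible transverse surgery also amounts to gluing in a single tight solid torus, so the two constructions are genuinely comparable.

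The next step is the bookkeeping of framings. Inadmissible transverse surgery glues a $T^2 \times I$ with rotational contact structure, rotating through a prescribed number of half-twists up to a slope $b$, so its effect is recorded by the slope of the resulting dividing set measured against a fixed longitude. Positive contact $r$-surgery on $L$ instead removes a standard Legendrian neighborhood and glues back a solid torus whose dividing-set slope is determined by $r$ together with the contact framing of $L$. The essential translation is that passing between the surface framing used to record the transverse slope and the contact framing of $L$ is governed by $\tb(L)$, so that by selecting $L$ with the appropriate Thurston--Bennequin invariant---adjusting it by stabilization when necessary---I can arrange the contact surgery coefficient to equal the prescribed positive value dictated by the transverse surgery.

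With the framings reconciled, I would compare the two glued-in pieces directly: each is a tight contact solid torus, and by \fullref{ka} such a filling is determined up to isotopy by its boundary characteristic foliation, so it suffices to check that the two operations induce the \emph{same} dividing set on the common boundary torus, which is exactly the slope identity established in the previous step. I expect the main obstacle to be precisely this bookkeeping: one must track the orientation conventions carefully and verify that the sign making inadmissible transverse surgery \emph{add} twisting is the one corresponding to a \emph{positive} contact surgery rather than a negative one. A secondary difficulty is that a single Legendrian approximation realizes only a discrete set of transverse coefficients, so the argument must incorporate stabilization of $L$ to cover the full range of inadmissible surgeries while keeping its positive transverse push-off transversely isotopic to $T$; this compatibility between stabilization and push-off is the delicate point.
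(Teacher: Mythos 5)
First, a point of order: the paper does not prove \fullref{Con} at all --- it is quoted from Conway \cite{Con1} and used as a black box --- so your proposal has to be measured against Conway's proof rather than anything in this paper. In outline, your skeleton is the correct one and is the one Conway follows: place both operations in standard model neighborhoods, record each as a slope on a boundary torus, translate between the transverse coefficient and the contact coefficient via $\tb(L)$, and use stabilized Legendrian approximations to cover all coefficients (negative stabilization does preserve the positive transverse push-off up to transverse isotopy, so that part of your plan is sound).

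There are, however, two genuine gaps. The first is your central claim that ``after the contact cut the inadmissible transverse surgery also amounts to gluing in a single \emph{tight} contact solid torus.'' With the definition used in this paper and in \cite{Con1}, the glued layer $T^2\times I$ may rotate through $n$ half twists; as soon as the added rotation reaches $\pi$, the contact planes realize the meridional slope of the reglued solid torus at an \emph{interior} torus, and the meridional disk bounded by a closed leaf of the characteristic foliation there is an overtwisted disk. This is exactly why inadmissible transverse surgery can produce overtwisted structures (it generalizes the Lutz twist), and in this regime no identification with the contact-surgery filling --- which is tight \emph{by definition} --- can proceed by uniqueness of tight fillings; the contactomorphism asserted by the theorem does not respect your decomposition into common complement plus glued torus. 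The second gap is that even in the minimally twisting regime, Kanda's theorem (\fullref{ka}) applies only when the dividing set of the reglued solid torus is a pair of \emph{longitudinal} curves, which is essentially the case of $1/q$-type coefficients; for a general positive contact $r$-surgery the tight filling is not unique (by the Giroux--Honda classification there are several tight structures, distinguished by relative Euler class), so the word ``corresponds'' in the statement must be implemented by matching the finitely many choices on both sides --- the uniqueness your argument leans on is simply unavailable. As written, your proof establishes the correspondence only for minimal-twisting inadmissible surgeries with $1/q$-type coefficients; that happens to be the case this paper actually uses in its corollaries, but it is not the full statement of Conway's theorem.
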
   

Suppose that $\gamma$ is a periodic orbit of an Anosov flow and consider a supporting bi--contact structure $(\xi_-=\ker \alpha_-,\xi_+=\ker \alpha_+)$ such that $R_{\alpha_-}\in \ker \alpha_+$. Consider the transverse push--off of $\gamma$ along $R_{\alpha_-}$. Such a Legendrian-transverse knot can be pushed to the original closed orbit by a Legendrian push-off using the flow of $-R_{\alpha_-}$. The closed orbit is the Legendrian approximation of \fullref{Con} and we have the following.

\begin{corollary}

Let $\gamma$ be a closed orbit in an Anosov flow defined by a bi-contact structure $(\xi_-,\xi_+)$. A positive $(1,q)$-contact surgery along $\gamma$ on $\xi_+$ yields a new hypertight contact structure.
\end{corollary}

\begin{proof}
Push the closed orbit $\gamma$ to a Legendrian-transverse knot using the flow of $R_{\alpha_-}$. If $q>0$ the bi-contact surgery induces an inadmissible transverse surgery along $K$ on $\xi_+$ by construction. \fullref{Cor3} implies that the new contact structure $\tilde{\xi}_+$ is hypertight and \fullref{Con} ensures that the same contact structure $\tilde{\xi}_+$ can be realized (up to isotopy) as a contact $(1,q)$-surgerey along the Legendrian approximation of $K$ given by $\gamma$. 

\end{proof}


\section{Projectively Anosov flows on hyperbolic 3-manifolds }

\label{nonA}

We finally apply our construction to produce families of bi-contact structures that do not define Anosov flows. Our examples are generated applying the bi-contact surgery on suitable {\it propellers} as described by Mitsumatsu (\cite{Mit}, \cite{Mit2}, see \fullref{Bi}). In particular, we show how to generate new non-Anosov bi-contact structures on the family of $3$-manifolds $\{M(q),\; q\in \mathbb{N}\}$ as described in the introduction. We remark that $M(q)$ is hyperbolic except when $q\in \{0,1,2,3,4\}$.

Consider $M(0)$ equipped with a pair of vertically rotating tight contact structures $(\xi_-=\ker \alpha_-,\: \xi_n=\ker \alpha_{n})$ defined as follows. The negative one $\xi_-$ has minimal twisting (equivalently is a negative contact structure defining the suspension flow as defined in \fullref{example1}). The positive one $\xi_n$ is a vertical rotating positive contact structure with total  rotation $\theta_{n}>2\pi$ in $[0,1]$. Since $\theta_{n}>2\pi$ the contact structures are tangent along a finite number of horizontal tori. Let $\Sigma$ one of this tori. After an isotopy of both the defining contact structures we assume that the (common) characteristic foliation induced by $\xi_-$ and $\xi_n$ on $\Sigma$ is composed by closed leaves. The pair $(\xi_-,\xi_n)$ is not a bi-contact structure yet since the plane fields $\xi_-,\xi_n$ are not transverse. We introduce a deformation on $\xi_n$ and define $\alpha_+=\alpha_n+\epsilon\:dz$. The pair $(\alpha_-,\alpha_+)$ define a bi-contact structure.

\begin{corollary}
Consider a torus bundle over the circle equipped with the bi-contact structure $(\xi_-,\xi_+)$ described above. A suspension knot $K$ is a Legendrian-transverse knot and a $(1,q)$-Legendrian-transverse along $K$ yields a bi-contact structure $(\tilde{\xi}_-,\tilde{\xi}_+)$.
\end{corollary}

\begin{proof}
By construction $K$ is Legendrian for both $\xi_-$ and $\xi_n$. The contact  structure defined by  $\alpha_+=\alpha_n+\epsilon \:dz$ is transverse to $K$. Therefore if $q\in \mathbb{N}$ a $(1,q)$-Legendrian-transverse surgery yields a bi-contact structure by \fullref{Bi}. 
e new manifold is $M(q)$.
\end{proof}
Note that if the torus bundle is the manifold $M(0)$ described in the introduction and we consider a Legendrian-transverse knot $K$ the suspension of $(0,0)$, a $(1,q)$-Legendrian transverse surgery yields the manifold $M(q)$. As a consequence we have the following.

\begin{corollary}
The manifold $M(q)$ obtained by Goodman surgery along the knot $K$ obtained by suspension of $(0,0)$ in $M(0)$ supports infinitely many non-homotopic bi-contact structures that do not define Anosov flows. The associated pA flows have an invariant subsurface of genus $g>0$.

\end{corollary}

\begin{figure}
\label{NonAnosov}

\includegraphics[width=0.8\textwidth]{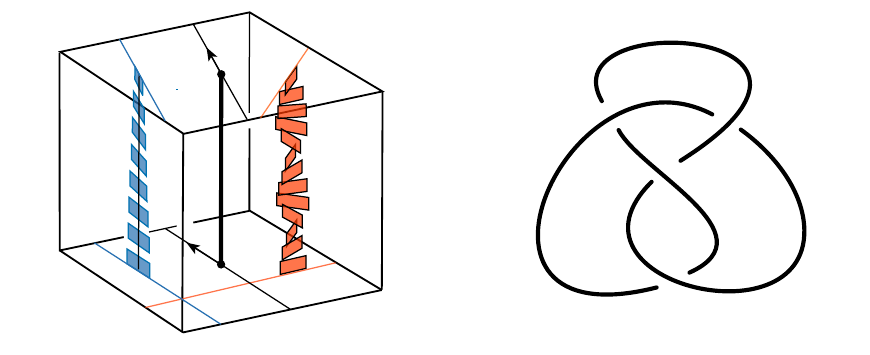}
  \caption{The negative contact structure $\ker \beta_n$ (in red) has non-minimal twisting. The positive one $\ker \alpha$ (in blue) defines the suspension flow. The framing of the surgery annulus associated to a suspension knot $K$ is determined by $\ker \alpha$. Clearly $K$ is Legendrian transverse for the perturbed plane field defined by the pair of contact forms $(\alpha,\beta_n+\epsilon\:dz)$. }
\label{NonAnosov}
\end{figure}

\begin{proof}
Consider $M(0)$ with the bi-contact structure described in the introduction of \fullref{nonA}. Since our operation perturbs the bi-contact structure just in a neighborhood $\Lambda$ of the Legendrian-transverse knot $K$, and $K$ is transverse to the invariant torus $\Sigma$ foliated by closed orbits, there is an invariant subsurface has genus $g>0$ in the new manifold containing an annulus foliated by closed orbits. Therefore, the new flow is pA and not Anosov. 
\end{proof}

\bibliographystyle{alpha} 
\bibliography{Bibliography}

\end{document}